\documentclass[12pt, a4paper,twoside,reqno]{amsproc}
\usepackage[utf8]{inputenc}
\usepackage[T1]{fontenc}
\usepackage[utf8]{inputenc}

\usepackage[dvipsnames]{xcolor}
\usepackage{graphicx}
\usepackage{float}
\usepackage{placeins}
\usepackage{subcaption}
\usepackage{array}
\usepackage{multirow}
\usepackage{longtable}
\usepackage{tabularx}
\usepackage{diagbox}
\usepackage{lscape}

\usepackage[shortlabels,inline]{enumitem}

\usepackage{algorithm}
\usepackage{algorithmicx}
\usepackage{algpseudocode}

\usepackage{amsmath,amssymb,amstext,mathtools,mathrsfs}
\usepackage{amsthm} 
\usepackage{soul}

\usepackage{tikz}
\usepackage{tikz-cd}
\usetikzlibrary{matrix,calc,arrows.meta,decorations.pathreplacing,backgrounds,decorations.markings}
\usepackage{extpfeil}
\usepackage{scalerel}[2016/12/29]
\definecolor{hqgreen}{RGB}{65,117,5}
\definecolor{hqred}{RGB}{208,2,27}
\definecolor{hqblue}{RGB}{20,80,200}
\definecolor{hqvert}{RGB}{144,19,254}
\definecolor{hqorange}{RGB}{224,110,15}

\usepackage{listings}

\usepackage{euscript}
\usepackage{ifthen}
\usepackage{wrapfig}
\usepackage{verbatim}
\usepackage{soul}
\usepackage[normalem]{ulem}
\usepackage[makeroom]{cancel}
\usepackage{blkarray}
\usepackage{hhline}
\usepackage[overload]{empheq}
\usepackage{fancyhdr}
\usepackage{appendix}
\usepackage{geometry}
\usepackage{cellspace}
\usepackage{scalerel,stackengine}
\usepackage{calc} 
\stackMath
\usepackage{xr}

\usepackage{pifont}

\usepackage{xurl} 
\usepackage[colorlinks=true, linkcolor=blue, citecolor=blue, urlcolor=blue]{hyperref}

\usepackage[backend=biber,style=alphabetic,url=true,doi=false,isbn=false]{biblatex}
\definecolor{codegreen}{rgb}{0,0.6,0}
\definecolor{codegray}{rgb}{0.5,0.5,0.5}
\definecolor{codepurple}{rgb}{0.58,0,0.82}
\definecolor{backcolour}{rgb}{0.95,0.95,0.92}

\lstdefinestyle{mystyle}{
  backgroundcolor=\color{backcolour},
  commentstyle=\color{codegreen},
  keywordstyle=\color{magenta},
  numberstyle=\tiny\color{codegray},
  stringstyle=\color{codepurple},
  basicstyle=\ttfamily\footnotesize,
  breaklines=true,
  captionpos=b,
  keepspaces=true,
  numbers=left,
  numbersep=5pt,
  showspaces=false,
  showstringspaces=false,
  showtabs=false,
  tabsize=2
}
\numberwithin{equation}{section}

\newsavebox{\overlongequation}

\newcommand\reallywidehat[1]{%
  \savestack{\tmpbox}{\stretchto{%
    \scaleto{%
      \scalerel*[\widthof{\ensuremath{#1}}]{\kern-.6pt\bigwedge\kern-.6pt}%
      {\rule[-\textheight/2]{1ex}{\textheight}}%
    }{\textheight}%
  }{0.5ex}}%
  \stackon[1pt]{#1}{\tmpbox}%
}

\newcommand{\xdashrightarrow}[2][]{\ext@arrow 0359\rightarrowfill@@{#1}{#2}}

\usepackage{etoolbox} 

\newtoggle{mathdiamond}
\togglefalse{mathdiamond}

\newcommand{\eqdiamond}{%
  \tag*{$\lozenge$}
  \global\toggletrue{mathdiamond}
}

\newcommand{\envdiamond}{%
  \iftoggle{mathdiamond}{%
    \global\togglefalse{mathdiamond}
  }{%
    \unskip\nobreak\hfill\ensuremath{\lozenge}
  }%
}

\newtheorem{theorem}{Theorem}[section]
\newtheorem{lemma}[theorem]{Lemma}
\newtheorem{proposition}[theorem]{Proposition}

\newtheorem*{theorem-star}{Theorem}
\theoremstyle{definition}
\newtheorem{definition}[theorem]{Definition}

\newtheorem{remark}[theorem]{Remark}

\newtheorem{example}{Example}

\AtEndEnvironment{theorem}{\envdiamond}
\AtEndEnvironment{lemma}{\envdiamond}
\AtEndEnvironment{proposition}{\envdiamond}
\AtEndEnvironment{corollary}{\envdiamond}
\AtEndEnvironment{conjecture}{\envdiamond}
\AtEndEnvironment{fact}{\envdiamond}
\AtEndEnvironment{assumption}{\envdiamond}
\AtEndEnvironment{definition}{\envdiamond}
\AtEndEnvironment{notation}{\envdiamond}
\AtEndEnvironment{problem}{\envdiamond}
\AtEndEnvironment{remark}{\envdiamond}
\AtEndEnvironment{note}{\envdiamond}
\AtEndEnvironment{question}{\envdiamond}
\AtEndEnvironment{exercise}{\envdiamond}
\AtEndEnvironment{example}{\envdiamond}

\graphicspath{{./figures/}}

\newcommand{\sslash}{\mathbin{/\mkern-6mu/}}

\title[Harder-Narasimhan Filtrations of Hyperquiver Representations]{Harder-Narasimhan Filtrations of \\ Hyperquiver Representations}
\author{Tommi Muller}\address{Mathematical Institute, University of Oxford, OX2 6GG, UK}\email{tommi.muller@maths.ox.ac.uk} 
\date{14 July 2026}

\begin{document}

\begin{abstract}
\emph{Hyperquiver representations} are  generalisations of quiver representations. They are directed hypergraphs with vector spaces assigned to vertices, and for every directed hyperedge, an assignment of a linear map from the tensor product of the source vector spaces to the tensor product of the target vector spaces. We study the moduli of hyperquiver representations, where we use affine GIT to describe the (in)stability of such objects under the action of a product of general linear groups. Our main result introduces the \emph{Harder-Narasimhan filtration} of a hyperquiver representation. We also derive a formula for the weights of this unique filtration, which specialise to the Harder-Narasimhan slopes of a quiver representation.
\end{abstract}

\maketitle
\thispagestyle{empty}

\section{Introduction}

\subsection{Quivers}

Quiver representations are objects of great importance lying at the intersection of many areas including representation theory, combinatorics, algebra, and algebraic and symplectic geometry. They consist of a \emph{quiver} $\mathcal{Q}$, i.e., a graph with a set of vertices $\mathcal{V}$ and a set of directed edges $\mathcal{E}$ described by source and target vertex maps $s,t: \mathcal{E} \rightarrow \mathcal{V}$, together with a \emph{representation} $\boldsymbol{\phi}$ of $\mathcal{Q}$, i.e., an assignment of a vector space $W_v$ to each vertex $v \in \mathcal{V}$ and a linear map $\phi_e : W_{s(e)} \rightarrow W_{t(e)}$ on each edge $e \in \mathcal{E}$. The dimensions $d_v$ of the vector spaces $W_v$ are encoded in a \emph{dimension vector} $\mathbf{d}$.

Since Gabriel's Theorem classifying indecomposable representations of quivers~\cite{Gab72}, quivers and their representations have found significant applications. When the group $G = \prod_{v \in \mathcal{V}} \mathrm{GL}(W_v)$ acts on the space of representations of a quiver $\mathcal{Q}$ by change of basis over an algebraically closed field $k$, one may use geometric invariant theory (GIT)~\cite{MumfordGIT} to construct moduli spaces of such representations. These include, for example, partial flag varieties, finite-dimensional algebras~\cite{King94}, moduli of spectral sequences~\cite{Kapranov_Routis_2017}, and, via the McKay correspondence, resolutions of quotient singularities~\cite{McKay80}.

The specific moduli space obtained depends on an additional datum called a \emph{stability parameter} $\boldsymbol\theta$, which is given by a number $\theta_v \in \mathbb{Q}$ attached to each vertex $v \in \mathcal{V}$. Whether a representation with dimension vector $\mathbf{d}$ lies on the moduli space or its boundary depends on whether it is \emph{slope-(semi)stable}; every representation can be assigned a value
\begin{equation}{\label{eq:slope-quiver}}
    \mu_{\boldsymbol{\theta}}(\mathbf{d}) = -\frac{\sum_{v \in V} \theta_v \cdot d_v}{\sum_{v \in V} d_v}
\end{equation}
called its \emph{slope}, and a representation is slope-semistable if all of its subrepresentations have smaller slope, and is slope-stable if all of its proper subrepresentations have strictly smaller slope. The category $\mathrm{Rep}(\mathcal{Q})$ of finite-dimensional representations of a quiver $Q$ also admits the following property once $\boldsymbol{\theta}$ is chosen.

\begin{theorem-star}[{\cite[Proposition 2.5]{Reineke03}}]{\label{thm:quiver-hn-filtration}}
Every representation $\boldsymbol{\varphi}$ with dimension vector $\mathbf{d} \neq \boldsymbol{0}$ has a unique finite-length filtration by subrepresentations with dimension vectors
$$\boldsymbol{0} = \mathbf{d}^0 \leq \mathbf{d}^1 \leq \cdots \leq \mathbf{d}^{M-1} \leq \mathbf{d}^M = \mathbf{d}$$
where $M \geq 1$ such that
\begin{enumerate}
	\item every successive quotient representation with dimension vector $\mathbf{d}^{i} - \mathbf{d}^{i-1}$ is slope-semistable
	\item $\mu_{\boldsymbol{\theta}}(\mathbf{d}^{1} - \mathbf{d}^{0}) > \cdots > \mu_{\boldsymbol{\theta}}(\mathbf{d}^{M} - \mathbf{d}^{M-1})$
\end{enumerate}
\end{theorem-star}

This unique object associated to a quiver representation is known as the \emph{Harder-Narasimhan filtration}. It serves as an important source of invariants~\cite{fersztand2024harder} for quiver representations. The slope-semistable representations are precisely those whose Harder-Narasimhan filtration has length $M = 1$.

\subsection{Hyperquivers}

In this paper, we study a generalisation of quivers and their representations, known as hyperquiver representations, which were first introduced in \cite{MullerNandaSeigal26}. A hyperquiver $\mathcal{H}$ consists of a set of vertices $\mathcal{V}$ as before, but this time the set $\mathcal{E}$ consists of directed hyperedges, which can have multiple, possibly repeated, source and target vertices. These are described by source and target maps $s,t: \mathcal{E} \rightarrow \bigsqcup_{n \geq 1} \mathcal{V}^n$ giving the tuple of source and target vertices $s(e)$ and $t(e)$ for a (hyper)edge $e \in \mathcal{E}$. Note that in this paper, we allow (hyper)edges to have multiple target vertices, which was not considered in ~\cite{MullerNandaSeigal26} where only one target vertex is allowed. An example of a hyperquiver can be seen in Figure~\ref{fig:hyperquiver-first-example}, consisting of two vertices and two (hyper)edges, where the red and blue edge each has two source vertices and one target vertex.

\begin{figure}[htbp]
\centering
\tikzset{every picture/.style={line width=0.75pt}} 
\begin{tikzpicture}[x=0.75pt,y=0.75pt,yscale=-1,xscale=1]
\draw [color={rgb, 255:red, 208; green, 2; blue, 27 }  ,draw opacity=1 ][line width=1.5]    (389.82,367.12) .. controls (369,324.8) and (283.5,340.8) .. (279.5,390.3) ;
\draw [color={rgb, 255:red, 74; green, 144; blue, 226 }  ,draw opacity=1 ][line width=1.5]    (544,408.75) .. controls (590.04,361.23) and (522.87,310.28) .. (477.86,363.11) ;
\draw [shift={(476.5,364.75)}, rotate = 309.04] [color={rgb, 255:red, 74; green, 144; blue, 226 }  ,draw opacity=1 ][line width=1.5]    (14.21,-4.28) .. controls (9.04,-1.82) and (4.3,-0.39) .. (0,0) .. controls (4.3,0.39) and (9.04,1.82) .. (14.21,4.28)   ;
\draw [color={rgb, 255:red, 74; green, 144; blue, 226 }  ,draw opacity=1 ][line width=1.5]    (474.07,371.27) .. controls (455.5,390.75) and (493,461.75) .. (544,408.75) ;
\draw [color={rgb, 255:red, 208; green, 2; blue, 27 }  ,draw opacity=1 ][line width=1.5]    (389.82,367.12) .. controls (411.5,313.8) and (286.5,302.8) .. (280,385.8) ;
\draw [color={rgb, 255:red, 208; green, 2; blue, 27 }  ,draw opacity=1 ][line width=1.5]    (279.5,390.3) .. controls (275.54,449.7) and (356.85,443.43) .. (388.72,373.41) ;
\draw [shift={(389.67,371.27)}, rotate = 113.4] [color={rgb, 255:red, 208; green, 2; blue, 27 }  ,draw opacity=1 ][line width=1.5]    (14.21,-4.28) .. controls (9.04,-1.82) and (4.3,-0.39) .. (0,0) .. controls (4.3,0.39) and (9.04,1.82) .. (14.21,4.28)   ;
\draw [color={rgb, 255:red, 74; green, 144; blue, 226 }  ,draw opacity=1 ][line width=1.5]    (389.67,371.27) .. controls (386.17,430.52) and (490.5,464.75) .. (544,408.75) ;
\draw  [fill={rgb, 255:red, 144; green, 19; blue, 254 }  ,fill opacity=1 ] (389.97,362.98) .. controls (392.31,363.09) and (394.14,365.04) .. (394.06,367.33) .. controls (393.97,369.62) and (392.01,371.38) .. (389.67,371.27) .. controls (387.33,371.16) and (385.5,369.21) .. (385.58,366.92) .. controls (385.67,364.63) and (387.63,362.87) .. (389.97,362.98) -- cycle ;
\draw  [fill={rgb, 255:red, 144; green, 19; blue, 254 }  ,fill opacity=1 ] (474.37,362.98) .. controls (476.71,363.09) and (478.54,365.04) .. (478.46,367.33) .. controls (478.37,369.62) and (476.41,371.38) .. (474.07,371.27) .. controls (471.73,371.16) and (469.9,369.21) .. (469.98,366.92) .. controls (470.07,364.63) and (472.03,362.87) .. (474.37,362.98) -- cycle ;
\draw (360,359.3) node [anchor=north west][inner sep=0.75pt]    {};
\draw (490.4,360.3) node [anchor=north west][inner sep=0.75pt]    {};
\end{tikzpicture}
         \caption{A hyperquiver with two vertices and two hyperedges.}
         \label{fig:hyperquiver-first-example}
\end{figure}

A representation $\boldsymbol{\phi}$ of a hyperquiver $\mathcal{H}$ is then defined to be an assignment of vector spaces $W_v$ to each vertex $v \in \mathcal{V}$ and a linear map $\phi_e$ from the tensor product of the source vector spaces $W_{s(e)} = W_{s(e)_1} \otimes \cdots \otimes W_{s(e)_m}$ to the tensor product of the target vector spaces $W_{t(e)} = W_{t(e)_1} \otimes \cdots \otimes W_{t(e)_n}$. A quiver with a representation is therefore the special case where every edge has exactly one source and target vertex. An algebra is an example of a hyperquiver representation with one vertex $v$ and a single hyperedge $(v,v) \rightarrow v$, and more generally, hyperquiver representations can encode many possible algebraic structures.

One can follow the same approach as before of using GIT to construct moduli spaces of hyperquiver representations by considering the space of representations $R(\mathcal{H},\mathbf{d}) = \bigoplus_{e \in \mathcal{E}} \mathrm{Hom}(W_{s(e)},W_{t(e)})$ under the action of $G = \prod_{v \in \mathcal{V}} \mathrm{GL}(W_v)$ as above. This leads one to ask whether representations parametrised by the moduli spaces can also be studied by some notion of slope and Harder-Narasimhan filtration. There are several challenges to this, which we will see and address in this paper:

\begin{enumerate}
    \item The category $\mathrm{Rep}(\mathcal{H})$ of representations of a hyperquiver $\mathcal{H}$ is not an abelian category unlike $\mathrm{Rep}(\mathcal{Q})$ for a quiver $\mathcal{Q}$, see Example~\ref{ex:finite-dimensional-algebras}. Hence, quotients by subobjects are not defined.
    \item Subobjects in $\mathrm{Rep}(\mathcal{H})$ are more difficult to describe than they are for $\mathrm{Rep}(\mathcal{Q})$.
    \item A notion of Harder-Narasimhan filtration can be defined for hyperquiver representations, but unlike quiver representations, slope-semistability of hyperquiver representations cannot be checked on length-1 filtrations, i.e., ``subrepresentations'', alone, see Example \ref{ex:hyperquiver-3-step}.
    \item The Harder-Narasimhan filtration of a hyperquiver representation additionally depends on a collection of weights that are analogous to the slopes $\mu_{\boldsymbol{\theta}}(\mathbf{d}^{i} - \mathbf{d}^{i-1})$ appearing in part (2) of the above theorem.
    \item A formula can be given for these weights that is almost the same as the slope formula \eqref{eq:slope-quiver}, but which contains an additional ``error term'' depending on the combinatorics of the underlying hyperquiver.
\end{enumerate}

In this paper, the key approach taken  to overcome these challenges is not to directly define slope and subrepresentations of hyperquiver representations, but instead to find descriptions of objects naturally attached to hyperquiver representations from the viewpoint of affine GIT. This approach was studied by Zamora~\cite{zamora-hn-paper} and Hoskins~\cite{Hoskins2012Stratifications} for quiver representations and Jones~\cite{jones2024canonicalfiltrationsfinitedimensionalalgebras} for finite-dimensional associative algebras and Lie algebras. 

\subsection{Harder-Narasimhan filtrations}

To develop a notion of slope and Harder-Narasimhan filtration for hyperquiver representations, we will describe them in terms of weights and maximally destabilising objects in affine GIT, i.e., the setting in GIT where a reductive group $G$ acts linearly on a vector space $X$. Fix a $G$-conjugation invariant norm $\|\cdot\|$ on the set of one-parameter subgroups (1-PSs) of $G$ induced by a Weyl-invariant inner product on the space of rational 1-PSs of a maximal torus of $G$. We can define the \emph{Kempf functional} $K$ which takes a representation $\boldsymbol{\phi} \in R(\mathcal{H},\mathbf{d})$ and produces the value
\begin{equation}{\label{eq:kempf-intro}}
    K(\boldsymbol{\phi}) = \inf \frac{(\chi_{\boldsymbol{\theta}},\lambda)}{\|\lambda\|}
\end{equation}
where $\chi_{\boldsymbol\theta} : G \rightarrow k^*$ is the character corresponding to the stability parameters $\boldsymbol{\theta}$, $(\cdot,\cdot)$ is the canonical pairing of characters and co-characters, and the infimum is over all 1-PSs \textit{admissible} for $\boldsymbol{\phi}$, i.e., the 1-PSs $\lambda$ such that $\lim_{t \rightarrow 0} \lambda(t)\cdot \boldsymbol\phi$ exists. A representation $\boldsymbol{\phi}$ is said to be \emph{$\boldsymbol{\theta}$-semistable} if $K(\boldsymbol{\phi}) \geq 0$, otherwise it is said to be \emph{$\boldsymbol{\theta}$-unstable}. If a representation $\boldsymbol{\phi}$ is $\boldsymbol{\theta}$-unstable, then Kempf's Theorem guarantees the existence of a 1-PS $\lambda : k^* \rightarrow G$ which minimises the Kempf functional, called a \emph{maximally destabilising 1-PS}, which is unique up to conjugation in the parabolic subgroup $P(\lambda)$ of $G$ determined by $\lambda$~\cite{kempfInstability}. The limit point $\lim_{t \rightarrow 0} \lambda(t)\cdot \boldsymbol\phi$ of a representation $\boldsymbol{\phi}$ under a maximally destabilising 1-PS $\lambda$ is called the \emph{associated graded} point $\mathrm{gr}(\boldsymbol{\phi})$. While $\mathrm{gr}(\boldsymbol{\phi})$ is not semistable respect to the stability parameters $\boldsymbol\theta$, it is semistable with respect to a different set of \emph{shifted stability parameters} $\tilde{\boldsymbol\theta}$, which will be described later in the paper.

For a representation $\boldsymbol{\phi} \in R(\mathcal{Q},\mathbf{d})$ of a quiver $\mathcal{Q}$, $\boldsymbol\theta$-semistability of $\boldsymbol{\phi}$ is equivalent to slope-semistability of $\boldsymbol{\phi}$ with respect to $\boldsymbol{\theta}$, under the assumption that $\langle \boldsymbol{\theta}, \mathbf{d} \rangle = 0$. This is a result known as King's Criterion~\cite{King94}. The condition $\langle \boldsymbol{\theta}, \mathbf{d} \rangle = 0$ eliminates central 1-PSs that destabilise every representation, which we explore for hyperquivers in Section~\ref{sec:destabilising-stability-params}. Furthermore, any 1-PS, in particular a maximally destabilising 1-PS $\lambda$ for a $\boldsymbol{\theta}$-unstable $\boldsymbol\phi$, splits the vector spaces $W_v$ into weight spaces $W_v = \bigoplus_{k=1}^m W_{v,k}$ and splits the edge maps ${\phi}_e$ into components $\phi_e^{ij} : W_{v,i} \rightarrow W_{v,j}$. Admissible 1-PSs necessarily have $\phi_e^{ij} = 0$ for $i > j$, and the limit under a maximally destabilising 1-PS $\lim_{t \rightarrow 0} \lambda(t) \cdot \boldsymbol{\phi}$ sends $\phi_e^{ij}$ to the zero map for $i < j$. Thus, the associated graded point $\mathrm{gr}(\boldsymbol{\phi})$ can be seen as a representation of $\mathcal{Q}$ assigning $\bigoplus_{k=1}^m W_{v,k}$ to each vertex $v \in \mathcal{V}$ and the linear maps $\mathrm{gr}(\boldsymbol{\phi})_e = (\mathrm{gr}(\boldsymbol{\phi})_e^{ij})_{i,j=1}^m$ to each edge $e \in \mathcal{E}$ with $\mathrm{gr}(\boldsymbol{\phi})_e^{ij} = \phi_e^{ij}$ for $i = j$ and 0 otherwise. Alternatively, one may view 1-PSs as inducing filtrations $F^k W_v = U_{v,k}$ on each vector space $W_k$, where $0 = U_{v,0} \subseteq U_{v,1} \subseteq \cdots \subseteq U_{v,m} = W_v$; the associated graded $\mathrm{gr}(\boldsymbol{\phi})$ then assigns $\bigoplus_{k=1}^m U_{v,k}/U_{v,k-1}$ to each vertex and the analogous linear maps to each edge as described above.

The Kempf functional is invariant under rational rescalings of 1-PSs, allowing us to view \eqref{eq:kempf-intro} as an optimisation problem whose minimiser can be found using the necessary and sufficient Karush-Kuhn-Tucker (KKT) Conditions from optimisation theory. This allows us to translate rational 1-PSs into the following central object of this paper, called a \emph{$\mathbb{Q}$-filtration}, see Definition~\ref{def:Q-filtration}. A $\mathbb{Q}$-filtration for a hyperquiver $\mathcal{H}$ with dimension vector $\mathbf{d}$ is a pair $(F^\bullet W,\boldsymbol{\ell})$ given by a filtration $F^\bullet W = (F^\bullet W_v)_{v \in \mathcal{V}}$ of each vector space $W_v$ as described above, together with a decreasing sequence of rational weights $\boldsymbol{\ell} = (\ell_k)_{k=1}^m$ with $\ell_1 > \cdots > \ell_m$. Likewise, we can translate admissible 1-PSs into objects called \emph{admissible $\mathbb{Q}$-filtrations}, in Definition~\ref{def:admissible-Q-filtration}.

We briefly explain what the analogue of the associated graded point for a hyperquiver representation should be. Let $\boldsymbol\phi \in R(\mathcal{H},\mathbf{d})$ be a $\boldsymbol\theta$-unstable representation and $(F^\bullet W,\boldsymbol{\ell})$ be an admissible $\mathbb{Q}$-filtration corresponding to a maximally destabilising 1-PS for $\boldsymbol\phi$. Denote $[m] := \{1,\ldots,m\}$. We define the \emph{refined hyperquiver} $\mathcal{H}_{F}$ to be the hyperquiver with vertex set $\mathcal{V} \times [m]$ and an edge $c = (e,I,J)$ for every $e \in \mathcal{E}$, $I : s(e) \rightarrow [m]$, and $J : t(e) \rightarrow [m]$, having source vertices $((s,I(s)))_{s \in s(e)}$ and target vertices $((t,J(t)))_{t \in t(e)}$. The associated graded point $\mathrm{gr}(\boldsymbol{\phi})$ is then seen as a hyperquiver representation of $H_{\boldsymbol\phi}$ with dimension vector $(d_{v,k})_{v \in \mathcal{V},k\in[m]}$ where $d_{v,k} = \dim(F^{k}W_v/F^{k-1}W_v)$. We denote this dimension vector by $\mathbf{d}_{\boldsymbol{\phi}}$. A full description, including what its edge maps should be, is found in Definition~\ref{def:refined-hyperquiver}.

The KKT Conditions allow us to both translate the existence and uniquness of a maximally destabilising rational 1-PS into a notion of a Harder-Narasimhan filtration of a hyperquiver representation, as well as to prove that the weights of this 1-PS admit a formula that is the hyperquiver analogue of the slope of a quiver representation. This is the main result of the paper. We present a simplified version below.

\begin{theorem-star}[Theorem~\ref{thm:hn-filtration}]
Let $\mathcal{H} = (\mathcal{V},\mathcal{E})$ be a hyperquiver, $\boldsymbol{\theta} \in \mathbb{Q}^{|\mathcal{V}|}$ be stability parameters, and $\boldsymbol{\phi} \in R(\mathcal{H},\mathbf{d})$ be a $\boldsymbol{\theta}$-unstable representation. Then there
exists a unique admissible $\mathbb{Q}$-filtration
$(F^\bullet W,\boldsymbol{\ell})$, the \emph{Harder-Narasimhan filtration}, such that 
\begin{enumerate}
  \item[(a)] for some suitably shifted stability parameters $\widetilde{\boldsymbol\theta}$ for $\mathcal{H}_{F}$, the associated graded hyperquiver representation $\mathrm{gr}(\boldsymbol{\phi}) \in R(\mathcal{H}_{F},\mathbf{d}_{\boldsymbol{\phi}})$ is $\widetilde{\boldsymbol\theta}$-semistable;
  \item[(b)]  the weights $\ell_k$ are determined by the formula 
\begin{equation}{\label{eq:HN-weights-formula-intro}}
\boldsymbol{\ell} = \boldsymbol{p}/\|\boldsymbol{p}\|_D^2 \quad \text{ for } \quad \boldsymbol{p} = -D^{-1} \left( I - A^\top L^\dagger A D^{-1}  \right)\boldsymbol{\Theta}
\end{equation}
where $\boldsymbol\Theta$, $D$, $A$, $L^\dagger$ depend only on the hyperquiver representation $\mathrm{gr}(\boldsymbol\phi)$.
\end{enumerate}
\end{theorem-star}

When $\mathcal{H}$ is a quiver, our theorem recovers the Harder-Narasimhan filtration for quiver representations; moreover, the $A^\top L^\dagger A D^{-1}$ term in \eqref{eq:HN-weights-formula-intro} vanishes for quivers and this formula reduces to $\frac{\boldsymbol{\ell}}{\|\ell\|_D^2} = -D^{-1}\boldsymbol{\Theta}$, which is precisely the slope formula \eqref{eq:slope-quiver} and the slopes $\mu_{\boldsymbol{\theta}}(\mathbf{d}^{i} - \mathbf{d}^{i-1})$ seen in Reineke's theorem as above~\cite{Reineke03}, see Remark~\ref{rem:quivers-hn-weights}.

\subsection{Outline}

In Section~\ref{sec:affine-GIT}, we review the necessary background on affine GIT. In Section~\ref{sec:setup}, we introduce hyperquivers, their representations, and study their semistability. Lastly, in Section~\ref{sec:hn-filtration}, we study instability of hyperquiver representations and state and prove the main theorem concerning the Harder-Narasimhan filtration.

\subsection{Conventions and notation}

Throughout, we work over an algebraically closed field $k$. We denote scalars by unbolded lowercase $x$, vectors by bold lowercase $\mathbf{x}$, and matrices by unbolded uppercase $X$. We write $[m] := \{1,\ldots,m\}$. All vector spaces are finite-dimensional. For a group $G$, $X_*(G)$ denotes the set of one-parameter subgroups $\lambda : k^* \rightarrow G$ of $G$ and $X^*(G)$ denotes the group of characters $\chi : G \rightarrow k^*$ of $G$.

\subsection{Acknowledgments}

I would like to express my deepest gratitude to both my supervisors Frances Kirwan and Vidit Nanda. I thank them for their unending patience, support, and guidance throughout this project. I would also like to thank Chenjing Bu and Kevin McGerty for helpful discussions.

\section{Affine GIT}{\label{sec:affine-GIT}}

We begin by reviewing background from affine GIT. We work over an algebraically closed field $k$. Let $G$ be a  reductive group acting on a vector space $V$, which we may view as the affine variety $\mathrm{Spec}\,k[V]$, where $k[V] = \mathrm{Sym}(V^*)$ is its coordinate ring. Let $L$ be a \textit{$G$-linearised} line bundle over $V$. This is a line bundle with an action $G \times L \rightarrow L$ such that the projection $\pi : L \rightarrow V$ is $G$-equivariant and the action of $G$ on the fibres of $L$ is a linear isomorphism, i.e., for every $g \in G$ and $v \in V$, the map on fibres $L_v \rightarrow L_{g \cdot v}$ given by $\ell \mapsto g \cdot \ell$ is a linear isomorphism. We denote a $G$-linearisation of $L$ by $\mathscr{L}$. The $G$-linearised line bundles over $V$ up to isomorphism form an abelian group under the tensor product $\otimes$ called the \textit{equivariant Picard group} $\mathrm{Pic}^G(V)$. When $G$ is the trivial group, $\mathrm{Pic}^G(V) = \mathrm{Pic}(V)$ is the Picard group of $V$. Furthermore, $\mathrm{Pic}(V)$ is the trivial group as the only line bundles affine space admits is the trivial line bundle $L = V \times k$. One can classify all $G$-linearised line bundles for any action of $G$ on $V$ as follows.

\begin{proposition}[{\cite[Lemma 2.2, Proposition 2.3]{KnopKraftVust1989Picard}}]
Let $G$ be a linear algebraic group acting on a vector space $V$. Then
\begin{equation*}
    \mathrm{Pic}^G(V) \cong \mathrm{Hom}(G,k^*), 
\end{equation*}
the group of characters of $G$.
\end{proposition}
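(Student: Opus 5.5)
The plan is to build a homomorphism $\Phi : \mathrm{Hom}(G,k^*) \to \mathrm{Pic}^G(V)$ and show that it is bijective. Given a character $\chi$, define $\Phi(\chi)$ to be the trivial bundle $L_\chi = V \times k$ with the action $g\cdot(v,z) = (g\cdot v, \chi(g)z)$. The projection to $V$ is equivariant, and on fibres the action is the linear map given by multiplication by $\chi(g)$. Since $L_\chi \otimes L_{\chi'} \cong L_{\chi\chi'}$, the map $\Phi$ is a group homomorphism. Its inverse will come from the observation recalled just above the statement: $\mathrm{Pic}(V)$ is trivial, so every $G$-linearised line bundle is, after forgetting the linearisation, isomorphic to $V \times k$.

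\textbf{Surjectivity.} Take a linearised bundle $\mathscr{L}$ and trivialise its underlying bundle as $V\times k$. The action then has the form $g\cdot(v,z) = (g\cdot v, c(g,v)z)$ with $c : G \times V \to k^*$ a regular function. The linearisation axioms say exactly that $c$ is a cocycle:
\[
c(gh,v)=c(g,h\cdot v)\,c(h,v).
\]
A regular invertible function on $G\times V$ corresponds to a unit of $k[G]\otimes k[V] = k[G][x_1,\dots,x_n]$. The units of a polynomial ring over a reduced ring are the units of the coefficient ring. Hence $c(g,v)=c(g)$ does not depend on $v$, and the cocycle identity then reads $c(gh)=c(g)c(h)$. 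So $c$ is a character $\chi$, and $\mathscr{L}\cong L_\chi$.

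\textbf{Injectivity.} Suppose $L_\chi \cong L_{\chi'}$ as linearised bundles. The isomorphism is multiplication by some $f\in k[V]^*=k^*$, so $f$ is a nonzero constant. Equivariance gives $f\chi(g)=\chi'(g)f$ for all $g$, and therefore $\chi=\chi'$.

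\textbf{Main obstacle.} The key point is the unit argument, namely that $\mathcal{O}(G\times V)^* = \mathcal{O}(G)^*$. This uses that $V$ is an affine space and that $G$ is reduced, which holds since we work over an algebraically closed field and $G$ is a variety. The same fact is what reduces any automorphism of the trivial bundle to a constant. The remaining checks, that the cocycle identity is equivalent to the action axioms and that tensor products correspond to products of characters, are routine.
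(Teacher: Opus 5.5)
Your proof is correct. The paper gives no argument of its own and only cites Knop--Kraft--Vust, so the relevant comparison is with the general route there.

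In that route the statement is a special case of a description of $\mathrm{Pic}^G(X)$ for an arbitrary $G$-variety $X$. The kernel of the forgetful map $\mathrm{Pic}^G(X)\to\mathrm{Pic}(X)$ is identified with algebraic $1$-cocycles $G\to\mathcal{O}(X)^*$ modulo coboundaries. These are computed via Rosenlicht's theorem $\mathcal{O}(G\times X)^*=\mathcal{O}(G)^*\,\mathcal{O}(X)^*$ together with $\mathcal{O}(G)^*=k^*\cdot X^*(G)$. Both facts need $G$ connected, and both fail already for finite $G$. This yields an exact sequence linking $\mathcal{O}(X)^*$, $X^*(G)$, $\mathrm{Pic}^G(X)$ and $\mathrm{Pic}(X)$. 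Setting $X=V$, with $\mathcal{O}(V)^*=k^*$ and $\mathrm{Pic}(V)=0$, gives the statement.

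Your argument is this specialisation done by hand, with two differences. You replace Rosenlicht's theorem by the elementary fact that $R[x_1,\dots,x_n]^*=R^*$ for reduced $R$. You also never need to know $\mathcal{O}(G)^*$, because once $c$ is independent of $v$ the cocycle identity alone makes it multiplicative.

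This gains you a short, self-contained proof that is valid for disconnected $G$ too, which is what the statement literally asserts. What the general route gains is applicability to $G$-varieties with non-constant units or non-trivial Picard group, and to the question of which line bundles admit a linearisation at all.
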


In other words, the only line bundle lying over $V$ is the trivial line bundle $L = V \times k$, and all $G$-linearisations of $L$ are through characters $\chi: G \rightarrow k^*$, where $G$ acts on $L$ via
$$g\cdot (v,\ell) = (g\cdot v, \chi(g)\cdot \ell), \quad v \in V, \ell \in L_v.$$

\begin{example}
If $G = \mathrm{GL}(V)$, then every character $\chi : \mathrm{GL}(V) \rightarrow k^*$ is of the form
$$\chi(g) = \mathrm{det}(g)^\theta, \quad \theta \in \mathbb{Z}.$$
Similarly, if $V_i$ are vector spaces for $i = 1,\ldots,n$ and $G = \prod_{i=1}^n \mathrm{GL}(V_i)$, 
then every character $\chi : G \rightarrow k^*$ is of the following form sending $\boldsymbol{g} = (g_1,\ldots,g_n) \in G$ to
$$\chi(\boldsymbol{g}) = \prod_{i=1}^n \mathrm{det}(g_i)^{\theta_i}$$
for some $\boldsymbol{\theta} = (\theta_i)_{i=1}^n \in \mathbb{Z}^n$. We call $\boldsymbol{\theta}$ a \textbf{stability parameter} for $G = \prod_{i=1}^n \mathrm{GL}(V_i)$.
\end{example}

The induced $G$-linearisation of $L^{\otimes m}$ for all  $m \geq 0$ is given by
\begin{equation*}
    g\cdot (v,\ell) = (g\cdot v, \chi(g)^m\cdot \ell), \quad \forall v \in V, \, \forall \ell \in L_v.
\end{equation*}
Define the \textit{ring of sections} of $L$ to be
\begin{equation*}
    R(V,L) := \bigoplus_{m \geq 0} H^0(V,L^{\otimes m})
\end{equation*}
where for $\sigma \in H^0(V,L^{\otimes i})$ and $\sigma' \in H^0(V,L^{\otimes j})$, the multiplication is given by
\begin{equation*}
    (\sigma \cdot \sigma')(v) := \sigma(v)\otimes\sigma'(v) \in (L^{\otimes(i+j)})_v \quad \forall v \in V.
\end{equation*}
Since $L$ is the trivial line bundle, we have $H^0(V,L^{\otimes m}) = k[V] = \mathrm{Sym}(V^*)$ for all $m \geq 0$. Hence, $R(V,L) \cong k[V][t]$ is a graded $\mathrm{Sym}(V^*)$-algebra in the variable $t$ lying in degree 1. There is an action of $G$ on the sections $\sigma \in H^0(V,L)$ of $L$, given by
\begin{equation*}
    (g \cdot \sigma)(v) := g \cdot \sigma(g^{-1} \cdot v), \quad \forall g \in G, \, \forall v \in V
\end{equation*}
which extends component-wise to $R(V,L)$. We say that $\sigma$ is \textit{$G$-invariant} if $g \cdot \sigma = \sigma$, and denote the submodule of $G$-invariant sections of $L$ by $H^0(V,L)^G$. As $G$ is reductive, $R(V,L)^G = \bigoplus_{m \geq 0} H^0(V,L^{\otimes m})^G$ is finitely generated.

We can describe these $G$-invariant sections concretely as follows. A section $f \in H^0(V,L^{\otimes m})$ is $G$-invariant if $(g \cdot f)(v) = g \cdot f(g^{-1} \cdot v) = \chi(g)^m f(g^{-1} \cdot v) = f(v)$, or in other words, $f(g \cdot v) = \chi(g)^m f(v)$ for all $v \in V$ and $g \in G$. The sections $H^0(V,L^{\otimes m})^G$ satisfying this relation are called \textit{$\chi^m$-semi-invariants}.

\begin{definition}
Let $G$ be a reductive group acting on a vector space $V$, and let $L$ be a $G$-linearised line bundle over $V$. A point $v \in V$ is \textbf{semistable} for this linearisation if there exists $\sigma \in H^0(V,L^{\otimes N})^G$ for some $N > 0$ such that $\sigma(v) \neq 0$, otherwise it is \textbf{unstable}.
\end{definition}

The set of semistable points for a given linearisation forms an open subset of $V$, which we denote by $V^{\mathrm{ss},\mathscr{L}}$, or $V^{\mathrm{ss}}$ if the linearisation $\mathscr{L}$ is clear from context. Note that it is possible for $V^{\mathrm{ss},\mathscr{L}}$ to be empty, depending on $\mathscr{L}$. We denote the set of unstable points by $V^{\mathscr{L},\mathrm{u}} := V \setminus V^{\mathscr{L},\mathrm{ss}}$ or $V^{\mathrm{u}}$. If $G = \prod_{i=1}^n \mathrm{GL}(V_i)$, then the linearisation $\mathscr{L}$ depends on a stability parameter $\boldsymbol{\theta} \in \mathbb{Z}^n$, and we will call a semistable point in this setting a \textbf{$\boldsymbol{\theta}$-semistable} point and denote the set of such points by $V^{\boldsymbol{\theta}\text{-}\mathrm{ss}}$.

The \textit{GIT quotient} of a vector space $V$ by the action of a reductive group $G$ with respect to a $G$-linearisation $\mathscr{L}$ is the scheme
	    \begin{equation*}
			V \sslash_\mathscr{L} G := \mathrm{Proj} \; R(V,L)^G.
		\end{equation*}
Note that $V \sslash_\mathscr{L} G$ is not necessarily projective over $k$, i.e., a closed subscheme of $\mathbb{P}^n$. By the Proj construction on a finitely generated ring, it is projective over the affine scheme $\mathrm{Spec} \, (R(V,L)^G)_0 = \mathrm{Spec} \, k[V]^G$, so it is projective over $k$ if $k[V]^G = k$. In general, $V \sslash_\mathscr{L} G$ is a quasi-projective scheme. If $\mathscr{L}$ is the trivial linearisation given by the trivial character $\chi = 1$, then the GIT quotient $V \sslash_\mathscr{L} G = \mathrm{Proj}\,k[V]^G[t] \cong \mathrm{Spec}\,k[V]^G$ recovers classical invariant theory. Furthermore, there is a \textit{good quotient map}
	\begin{equation*}
		V^{\mathrm{ss},\mathscr{L}} \rightarrow V \sslash_\mathscr{L} G
	\end{equation*}
so that points in $V \sslash_\mathscr{L} G$ parametrise $G$-orbits in $V^{\mathrm{ss},\mathscr{L}}$, up to the equivalence that two $G$-orbits are identified if their closures intersect~\cite[Chapter 8.1]{Dolgachev_2003}. Evidently, the group $\mathrm{Pic}^G(V)$ controls all the possible GIT quotients $V \sslash_\mathscr{L} G$ one can construct. 

One can ask when exactly a point $v \in V$ is semistable for a given linearisation. This is answered by the \textit{Hilbert-Mumford Criterion} by checking a condition among \textbf{one-parameter subgroups} (\textbf{1-PSs}) or \textbf{cocharacters} $\lambda : k^* \rightarrow G$ of $G$.

Let $X^*(G)$ be the group of characters of $G$ and let and $X_*(G)$ be the set of cocharacters of $G$. Recall that there is a canonical map $(\cdot,\cdot) : X^*(G) \times X_*(G) \rightarrow \mathbb{Z}$ defined by $(\chi,\lambda) = m$ where $m$ is the exponent of the composition $\chi(\lambda(t)) = t^m$ for all $t \in k^*$. Note that the map $(\cdot,\cdot)$ is additive in the first argument and is a pairing if $G$ is abelian. For $v \in V$ and $\lambda \in X_*(G)$, we say that $\lim_{t \rightarrow 0} \lambda(t)\cdot v$ \textit{exists} if the map $\alpha : \mathbb{A}^1\setminus\{0\} \rightarrow V$ sending $t \mapsto \lambda(t)\cdot v$ extends to a map $\widetilde{\alpha} : \mathbb{A}^1 \rightarrow V$. If it does exist, then such an extension $\widetilde{\alpha}$ is unique as $V$ is separated, and the limit is defined to be $\lim_{t \rightarrow 0} \lambda(t)\cdot v := \widetilde{\alpha}(0)$, in which case we say that $\lambda$ is an \textbf{admissible} 1-PS for $v$. We denote the set of non-trivial admissible 1-PSs of $\lambda$ for $v$ to be $C_v$.

\begin{theorem}[Affine Hilbert-Mumford Criterion {\cite[Proposition 2.5]{Hoskins2012Stratifications}}]
Let $G$ be a reductive group acting on a vector space $V$, and let $\chi: G \rightarrow k^*$ be a character inducing the $G$-linearisation $\mathscr{L}$. Then $v \in V^{\mathrm{ss},\mathscr{L}}$ if and only if
\begin{equation*}
    (\chi, \lambda) \geq 0, \quad \forall \, \lambda \in C_v. \eqdiamond
\end{equation*}
\end{theorem}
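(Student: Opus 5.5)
The plan is to prove both directions by reducing to a single one-parameter subgroup and reading off weights. Fix $v \in V$ and $\lambda \in X_*(G)$. Since $\lambda(k^*)$ is a torus acting linearly on $V$, decompose $V = \bigoplus_{r \in \mathbb{Z}} V_r$ into weight spaces, so that $\lambda(t)\cdot v = \sum_r t^r v_r$. The limit $\lim_{t\to 0}\lambda(t)\cdot v$ exists exactly when $v_r = 0$ for all $r<0$. In that case the limit is $v_0$.

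\emph{Semistable $\Rightarrow$ positivity.} Suppose $\sigma \in H^0(V,L^{\otimes N})^G = k[V]$ is a $\chi^N$-semi-invariant with $\sigma(v)\neq 0$. For admissible $\lambda$, put $g=\lambda(t)$ in the semi-invariance relation. This gives $\sigma(\lambda(t)\cdot v) = t^{N(\chi,\lambda)}\sigma(v)$. The left-hand side is a polynomial in $t$, since $\sigma$ is a polynomial and $\lambda(t)\cdot v$ extends to $\mathbb{A}^1$. It is therefore regular at $t=0$. As $\sigma(v)\neq0$, we get $N(\chi,\lambda)\ge 0$, hence $(\chi,\lambda)\geq 0$.

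\emph{Positivity $\Rightarrow$ semistable.} This is the substantive direction. I would use the standard trick of passing to the affine cone on the linearised line bundle. Let $G$ act on $V\times k$ by $g\cdot(w,z) = (g\cdot w, \chi(g)^{-1}z)$. Then $\chi^N$-semi-invariants $\sigma$ correspond to invariant monomials $\sigma z^N \in k[V\times k]^G$; the sign convention is to be checked against the definition of the action on sections. By the correspondence between invariants and orbit closures for reductive groups, $v$ is unstable exactly when every invariant of positive $z$-degree vanishes at $(v,1)$. The separation property of invariants for closed disjoint $G$-stable sets then shows this happens exactly when $\overline{G\cdot(v,1)}$ meets $V\times\{0\}$.

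Now apply the classical Hilbert–Mumford theorem for reductive groups acting on affine varieties, in the form of Kempf/Richardson: if a closed $G$-stable set meets an orbit closure, there is a 1-PS $\lambda$ with $\lim_{t\to0}\lambda(t)\cdot(v,1)$ existing and lying in that set. Existence of the limit in the first factor says $\lambda \in C_v$. In the second factor, $\lambda(t)\cdot 1 = t^{-(\chi,\lambda)}$, and for the limit to exist and be $0$ we need $(\chi,\lambda)<0$, contradicting the hypothesis. Such a $\lambda$ is nontrivial, so it genuinely lies in $C_v$.

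The main obstacle is this last step: invoking the affine Hilbert–Mumford theorem, which requires a 1-PS reaching a closed $G$-stable subset rather than just a closed orbit, and checking that the sign conventions in the $G$-action on sections match so that $(\chi,\lambda)<0$ rather than $>0$ arises. I would cite Kempf's result on existence of destabilising 1-PSs for closed $G$-stable subsets, and verify the signs directly with $G=k^*$ acting on $k$.
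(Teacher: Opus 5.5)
Your proof is correct; the paper gives no proof of its own and only cites Hoskins, and your route is essentially King's standard argument for this criterion. You lift to $V\times k$ with $G$ acting by $\chi^{-1}$ on the second factor, identify instability of $v$ with $\overline{G\cdot(v,1)}$ meeting $V\times\{0\}$ by separating disjoint closed invariant sets (using that $k[V][z]^G$ is graded by $z$-degree), and apply the Hilbert--Mumford theorem; your sign computation $\lambda(t)\cdot(v,1)=(\lambda(t)\cdot v,\,t^{-(\chi,\lambda)})$ is right.

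The step you flag as the main obstacle is not one. The set $\overline{G\cdot(v,1)}\cap(V\times\{0\})$ is a nonempty closed $G$-stable subset of $\overline{G\cdot(v,1)}$, so it contains the unique closed orbit of that orbit closure. The classical Richardson--Birkes form of the criterion, which gives a 1-PS whose limit lands in that closed orbit, therefore already suffices.
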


\begin{remark}{\label{rem:pairing-scaling-semistability}}
Since $(\cdot,\cdot)$ satisfies $(\chi^N,\lambda) = N(\chi,\lambda)$ for $N \geq 1$, the sign of $(\chi,\lambda)$ is unchanged when scaling $\chi$ by a positive integer. Therefore, if $\mathscr{L}$ is the linearisation defined by $\chi$ and $\mathscr{L}^N$ is the linearisation defined by $\chi^N$, then by the Hilbert-Mumford Criterion, $V^{\mathscr{L},\mathrm{ss}} = V^{\mathscr{L}^N,\mathrm{ss}}$. In particular, if $G = \prod_{i=1}^n \mathrm{GL}(V_i)$ and $\boldsymbol{\theta} \in \mathbb{Z}^n$ is a stability parameter, then $V^{\boldsymbol{\theta},\mathrm{ss}} = V^{N\boldsymbol{\theta},\mathrm{ss}}$ for any $N \geq 1$.
\end{remark}

\begin{remark}{\label{rem:G-invaraint-subvariety}}
Let $Y \subseteq V$ be a closed $G$-invariant subvariety. Then for any 1-PS $\lambda : k^* \rightarrow G$, we have $\lambda(t)\cdot y \in Y$ for all $y \in Y$ and $t \neq 0$ as $Y$ is $G$-invariant, and if the limit at $t = 0$ exists in $V$, then it exists in $Y$ as $Y$ is closed. Since the pairing $(\chi,\lambda)$ is independent of $Y$, we can therefore consider more generally the GIT problem of $G$ acting on $Y$ with the linearisation $\mathscr{L}$, and we have $Y^{\mathrm{ss},\mathscr{L}} = Y \cap V^{\mathrm{ss},\mathscr{L}}$.
\end{remark}

\section{Hyperquivers, representations, and semistability}{\label{sec:setup}}

In this section, we introduce hyperquivers and their representations, and study their semistability.

\begin{definition}
A \textbf{hyperquiver} is a directed hypergraph $\mathcal{H} = (\mathcal{V},\mathcal{E},s,t)$ consisting of a finite set of vertices $\mathcal{V}$, a finite set of directed hyperedges $\mathcal{E}$, together with source and target functions $s,t: \mathcal{E} \rightarrow \bigsqcup_{n \geq 1} \mathcal{V}^n$, where each hyperedge $e \in \mathcal{E}$ has a finite tuple of source vertices $s(e)$ and a finite tuple of target vertices $t(e)$.
\end{definition}

For brevity, we may write $\mathcal{H} = (\mathcal{V},\mathcal{E})$ and refer to hyperedges as \textit{edges}. When indexing a sum, product, or other operation over $s(e)$ and $t(e)$, we will write $s \in s(e)$ and $t \in t(e)$ to mean indexing $s$ over $s(e)$ and $t$ over $t(e)$ in the order they appear in the tuples.

\begin{definition}
Let $\mathcal{H} = (\mathcal{V},\mathcal{E},s,t)$ be a hyperquiver. A \textbf{representation} of $\mathcal{H}$ is an assignment of non-zero finite-dimensional vector spaces $W_v$ to each vertex $v \in \mathcal{V}$, and a linear map $\phi_e : \bigotimes_{s \in s(e)} W_{s} \rightarrow \bigotimes_{t \in t(e)} W_{t}$ to each edge $e \in \mathcal{E}$. The \textbf{dimension vector} of the representation is $\mathbf{d} = (d_v)_{v \in \mathcal{V}}$ where $d_v := \dim(W_v)$. The \textit{space of hyperquiver representations} is the vector space
\begin{equation*}
    R(\mathcal{H},\mathbf{d}) = \bigoplus_{e \in \mathcal{E}} \, \mathrm{Hom}\left(\bigotimes_{s \in s(e)} W_{s}, \bigotimes_{t \in t(e)} W_{t}\right).
    \eqdiamond
\end{equation*}
\end{definition}

We call a point $\boldsymbol{\phi} = (\phi_e)_{e \in \mathcal{E}}$ in $R(\mathcal{H},\mathbf{d})$ a \textit{representation} of $\mathcal{H}$, for brevity. The representations of $\mathcal{H}$ form a category $\mathrm{Rep}(\mathcal{H})$, whose objects are $\boldsymbol{\phi} \in R(\mathcal{H},\mathbf{d})$ for some $\mathbf{d}$, and whose morphisms are collections of linear maps $f_v : W_v \rightarrow W'_v$ for every $v \in \mathcal{V}$ such that the diagram
\begin{center}
\begin{tikzcd}
\displaystyle\bigotimes_{i=1}^m W_{s_i(e)}
    \arrow[dd, "\bigotimes_{i=1}^m f_{s_i(e)}"']
    \arrow[rr, "\phi_e"]
&&
\displaystyle\bigotimes_{j=1}^n W_{t_j(e)}
    \arrow[dd, "\bigotimes_{j=1}^n f_{t_j(e)}"]
\\
\\
\displaystyle\bigotimes_{i=1}^m W'_{s_i(e)}
    \arrow[rr, "\phi'_e"']
&&
\displaystyle\bigotimes_{j=1}^n W'_{t_j(e)}
\end{tikzcd}
\end{center}
commutes for every $e \in \mathcal{E}$. As we will see in the next example, $\mathrm{Rep}(\mathcal{H})$ is in general not a pre-additive category, hence it is not an abelian category.

\begin{example}[Finite-dimensional algebras]{\label{ex:finite-dimensional-algebras}}
A finite-dimensional algebra $A$ can be drawn as a hyperquiver representation, shown in Figure~\ref{fig:algebra-hyperquiver}, where the hyperquiver has one vertex $v$, one edge $e: (v,v) \rightarrow v$, and the representation assigns $A$ to $v$ and the multiplication map $A \otimes A \rightarrow A$ to $e$. By Remark~\ref{rem:G-invaraint-subvariety}, when additional closed conditions are imposed such as associativity or the Jacobi identity, for example, then we obtain associative algebras and Lie algebras.

To see that $\mathrm{Rep}(\mathcal{H})$ is not pre-additive for the hyperquiver $\mathcal{H}$ in Figure~\ref{fig:algebra-hyperquiver}, let $A = k = \mathbb{C}$ with $\otimes$ being ordinary multiplication and $f : A \rightarrow A$ be the identity morphism. Then for $x,y \in A$, $(\mathrm{id} + \mathrm{id})(xy) = 2xy$ while $(\mathrm{id} + \mathrm{id})(x)\cdot(\mathrm{id} + \mathrm{id})(y) = (2x)(2y) = 4xy$ are not equal. Thus, $\mathrm{Rep}(\mathcal{H})$ for an arbitrary hyperquiver $\mathcal{H}$ is in general not pre-additive and hence not abelian.

\end{example}

\begin{figure}[htbp]
     \centering
\tikzset{every picture/.style={line width=0.7pt}} 
\begin{tikzpicture}[x=0.75pt,y=0.75pt,yscale=-1,xscale=1]
\draw [color={rgb, 255:red, 208; green, 2; blue, 27 }  ,draw opacity=1 ][line width=1.5]    (487.23,505.56) .. controls (573.32,405.3) and (292.58,400.84) .. (385.79,503.99) ;
\draw [shift={(387.23,505.56)}, rotate = 227.06] [color={rgb, 255:red, 208; green, 2; blue, 27 }  ,draw opacity=1 ][line width=1.5]    (14.21,-4.28) .. controls (9.04,-1.82) and (4.3,-0.39) .. (0,0) .. controls (4.3,0.39) and (9.04,1.82) .. (14.21,4.28)   ;
\draw [color={rgb, 255:red, 208; green, 2; blue, 27 }  ,draw opacity=1 ][line width=1.5]    (387.23,505.56) .. controls (378.75,545.8) and (469.75,528.3) .. (487.23,505.56) ;
\draw [color={rgb, 255:red, 208; green, 2; blue, 27 }  ,draw opacity=1 ][line width=1.5]    (387.23,505.56) .. controls (417.75,472.3) and (450.25,536.3) .. (487.23,505.56) ;
\draw  [fill={rgb, 255:red, 144; green, 19; blue, 254 }  ,fill opacity=1 ] (383.43,503.89) .. controls (384.4,501.75) and (386.88,500.77) .. (388.98,501.69) .. controls (391.08,502.61) and (391.99,505.09) .. (391.02,507.23) .. controls (390.06,509.36) and (387.57,510.34) .. (385.48,509.42) .. controls (383.38,508.5) and (382.47,506.02) .. (383.43,503.89) -- cycle ;
\draw (360,507.91) node [anchor=north west][inner sep=0.75pt]    {$A$};
\end{tikzpicture}
     \caption{A hyperquiver representation encoding an algebra $A \otimes A \rightarrow A$.}
     \label{fig:algebra-hyperquiver}
\end{figure}

\begin{example}{\label{ex:lie-algebra-acting-on-rep}}
Let $U$ be a Lie group and $\rho: U \rightarrow \mathrm{GL}(V)$ be a representation. Then the differential $d\rho: \mathrm{Lie}(U) \rightarrow \mathfrak{gl}(V)$ gives rise to an action of $\mathrm{Lie}(U)$ on $V$ given by $X \cdot v := d\rho(X)(v)$, where $X \in \mathrm{Lie}(U)$ and $v \in V$. This scenario can be drawn as the hyperquiver representation in Figure~\ref{fig:lie-algebra-act-on-rep}, where the blue edge is the Lie algebra action $\mathrm{Lie}(U) \otimes V \rightarrow V$ and the red edge is the Lie bracket $[\cdot,\cdot]: \mathrm{Lie}(U) \otimes \mathrm{Lie}(U) \rightarrow \mathrm{Lie}(U)$.
\end{example}

\begin{figure}[htbp]
\centering
\tikzset{every picture/.style={line width=0.75pt}} 
\begin{tikzpicture}[x=0.75pt,y=0.75pt,yscale=-1,xscale=1]
\draw [color={rgb, 255:red, 208; green, 2; blue, 27 }  ,draw opacity=1 ][line width=1.5]    (389.82,367.12) .. controls (369,324.8) and (283.5,340.8) .. (279.5,390.3) ;
\draw [color={rgb, 255:red, 74; green, 144; blue, 226 }  ,draw opacity=1 ][line width=1.5]    (544,408.75) .. controls (590.04,361.23) and (522.87,310.28) .. (477.86,363.11) ;
\draw [shift={(476.5,364.75)}, rotate = 309.04] [color={rgb, 255:red, 74; green, 144; blue, 226 }  ,draw opacity=1 ][line width=1.5]    (14.21,-4.28) .. controls (9.04,-1.82) and (4.3,-0.39) .. (0,0) .. controls (4.3,0.39) and (9.04,1.82) .. (14.21,4.28)   ;
\draw [color={rgb, 255:red, 74; green, 144; blue, 226 }  ,draw opacity=1 ][line width=1.5]    (474.07,371.27) .. controls (455.5,390.75) and (493,461.75) .. (544,408.75) ;
\draw [color={rgb, 255:red, 208; green, 2; blue, 27 }  ,draw opacity=1 ][line width=1.5]    (389.82,367.12) .. controls (411.5,313.8) and (286.5,302.8) .. (280,385.8) ;
\draw [color={rgb, 255:red, 208; green, 2; blue, 27 }  ,draw opacity=1 ][line width=1.5]    (279.5,390.3) .. controls (275.54,449.7) and (356.85,443.43) .. (388.72,373.41) ;
\draw [shift={(389.67,371.27)}, rotate = 113.4] [color={rgb, 255:red, 208; green, 2; blue, 27 }  ,draw opacity=1 ][line width=1.5]    (14.21,-4.28) .. controls (9.04,-1.82) and (4.3,-0.39) .. (0,0) .. controls (4.3,0.39) and (9.04,1.82) .. (14.21,4.28)   ;
\draw [color={rgb, 255:red, 74; green, 144; blue, 226 }  ,draw opacity=1 ][line width=1.5]    (389.67,371.27) .. controls (386.17,430.52) and (490.5,464.75) .. (544,408.75) ;
\draw  [fill={rgb, 255:red, 144; green, 19; blue, 254 }  ,fill opacity=1 ] (389.97,362.98) .. controls (392.31,363.09) and (394.14,365.04) .. (394.06,367.33) .. controls (393.97,369.62) and (392.01,371.38) .. (389.67,371.27) .. controls (387.33,371.16) and (385.5,369.21) .. (385.58,366.92) .. controls (385.67,364.63) and (387.63,362.87) .. (389.97,362.98) -- cycle ;
\draw  [fill={rgb, 255:red, 144; green, 19; blue, 254 }  ,fill opacity=1 ] (474.37,362.98) .. controls (476.71,363.09) and (478.54,365.04) .. (478.46,367.33) .. controls (478.37,369.62) and (476.41,371.38) .. (474.07,371.27) .. controls (471.73,371.16) and (469.9,369.21) .. (469.98,366.92) .. controls (470.07,364.63) and (472.03,362.87) .. (474.37,362.98) -- cycle ;
\draw (327,359.3) node [anchor=north west][inner sep=0.75pt]    {$\mathrm{Lie}(U)$};
\draw (490.4,360.3) node [anchor=north west][inner sep=0.75pt]    {$V$};
\end{tikzpicture}
         \caption{A hyperquiver representation corresponding to the action of a Lie algebra $\mathrm{Lie}(U)$ on a representation $V$ of its Lie group $U$.}
         \label{fig:lie-algebra-act-on-rep}
\end{figure}

\begin{example}{\label{ex:double-algebra}}
The following is an extension of the previous example and is a setting useful in non-reductive GIT~\cite{berczi-non-reductive}. Let $H = U \rtimes Q$ be the Levi decomposition of a linear algebraic group $H$, where $U$ is the unipotent radical of  $H$ and $Q$ is a reductive subgroup, acting by conjugation on $U$. Suppose $\lambda: k^* \rightarrow Z(Q)$ is a 1-PS such that the adjoint action of $\lambda(k^*)$ on $\text{Lie}(U)$ grades $\text{Lie}(U)$ into $Q$-module weight spaces $\text{Lie}(U)_i$ all with positive weights. Then $\lambda(k^*)$ decomposes $V$ into a direct sum of $Q$-module weight spaces $V_j$. From this, we obtain a hyperquiver representation shown in Figure~\ref{fig:doubly-graded-hyperquiver}.
\end{example}

\begin{figure}[ht]

\tikzset{every picture/.style={line width=0.75pt}} 

\begin{tikzpicture}[x=0.75pt,y=0.75pt,yscale=-1,xscale=1]

\draw [color={rgb, 255:red, 74; green, 144; blue, 226 }  ,draw opacity=1 ][line width=1.5]    (328.33,523.79) .. controls (329.99,572.05) and (391.88,600.02) .. (463.75,524.93) ;
\draw [shift={(464.83,523.79)}, rotate = 133.39] [color={rgb, 255:red, 74; green, 144; blue, 226 }  ,draw opacity=1 ][line width=1.5]    (14.21,-4.28) .. controls (9.04,-1.82) and (4.3,-0.39) .. (0,0) .. controls (4.3,0.39) and (9.04,1.82) .. (14.21,4.28)   ;
\draw [color={rgb, 255:red, 74; green, 144; blue, 226 }  ,draw opacity=1 ][line width=1.5]    (180.33,523.79) .. controls (293.5,596.8) and (361,584.8) .. (413.5,562.8) ;
\draw [color={rgb, 255:red, 208; green, 2; blue, 27 }  ,draw opacity=1 ][line width=1.5]    (180.33,523.79) .. controls (203.65,566.15) and (284.04,577.94) .. (326.42,624.63) ;
\draw [shift={(328.33,626.79)}, rotate = 229.22] [color={rgb, 255:red, 208; green, 2; blue, 27 }  ,draw opacity=1 ][line width=1.5]    (14.21,-4.28) .. controls (9.04,-1.82) and (4.3,-0.39) .. (0,0) .. controls (4.3,0.39) and (9.04,1.82) .. (14.21,4.28)   ;
\draw [color={rgb, 255:red, 208; green, 2; blue, 27 }  ,draw opacity=1 ][line width=1.5]    (180.33,626.79) .. controls (188,583.3) and (221.5,563.8) .. (269,583.8) ;
\draw  [fill={rgb, 255:red, 144; green, 19; blue, 254 }  ,fill opacity=1 ] (180.48,622.64) .. controls (182.83,622.76) and (184.65,624.7) .. (184.57,626.99) .. controls (184.49,629.28) and (182.52,631.04) .. (180.18,630.93) .. controls (177.84,630.82) and (176.01,628.87) .. (176.1,626.58) .. controls (176.18,624.3) and (178.14,622.53) .. (180.48,622.64) -- cycle ;
\draw  [fill={rgb, 255:red, 144; green, 19; blue, 254 }  ,fill opacity=1 ] (180.48,519.64) .. controls (182.83,519.76) and (184.65,521.7) .. (184.57,523.99) .. controls (184.49,526.28) and (182.52,528.04) .. (180.18,527.93) .. controls (177.84,527.82) and (176.01,525.87) .. (176.1,523.58) .. controls (176.18,521.3) and (178.14,519.53) .. (180.48,519.64) -- cycle ;
\draw  [fill={rgb, 255:red, 144; green, 19; blue, 254 }  ,fill opacity=1 ] (328.48,622.64) .. controls (330.83,622.76) and (332.65,624.7) .. (332.57,626.99) .. controls (332.49,629.28) and (330.52,631.04) .. (328.18,630.93) .. controls (325.84,630.82) and (324.01,628.87) .. (324.1,626.58) .. controls (324.18,624.3) and (326.14,622.53) .. (328.48,622.64) -- cycle ;
\draw  [fill={rgb, 255:red, 144; green, 19; blue, 254 }  ,fill opacity=1 ] (328.48,519.64) .. controls (330.83,519.76) and (332.65,521.7) .. (332.57,523.99) .. controls (332.49,526.28) and (330.52,528.04) .. (328.18,527.93) .. controls (325.84,527.82) and (324.01,525.87) .. (324.1,523.58) .. controls (324.18,521.3) and (326.14,519.53) .. (328.48,519.64) -- cycle ;
\draw  [fill={rgb, 255:red, 144; green, 19; blue, 254 }  ,fill opacity=1 ] (464.98,519.64) .. controls (467.33,519.76) and (469.15,521.7) .. (469.07,523.99) .. controls (468.99,526.28) and (467.02,528.04) .. (464.68,527.93) .. controls (462.34,527.82) and (460.51,525.87) .. (460.6,523.58) .. controls (460.68,521.3) and (462.64,519.53) .. (464.98,519.64) -- cycle ;

\draw (126,630.91) node [anchor=north west][inner sep=0.75pt]    {$\dotsc $};
\draw (194,629.91) node [anchor=north west][inner sep=0.75pt]    {$V_{j}$};
\draw (269,629.91) node [anchor=north west][inner sep=0.75pt]    {$\dotsc $};
\draw (115.5,501.91) node [anchor=north west][inner sep=0.75pt]    {$\dotsc $};
\draw (182,497.91) node [anchor=north west][inner sep=0.75pt]    {$\text{Lie}( U)_{i}$};
\draw (257,501.91) node [anchor=north west][inner sep=0.75pt]    {$\dotsc $};
\draw (342,629.91) node [anchor=north west][inner sep=0.75pt]    {$V_{i+j}$};
\draw (330,497.91) node [anchor=north west][inner sep=0.75pt]    {$\text{Lie}( U)_{i'}$};
\draw (405,501.91) node [anchor=north west][inner sep=0.75pt]    {$\dotsc $};
\draw (466.5,497.91) node [anchor=north west][inner sep=0.75pt]    {$\text{Lie}( U)_{i+i'}$};
\draw (541.5,501.91) node [anchor=north west][inner sep=0.75pt]    {$\dotsc $};
\draw (415.5,629.91) node [anchor=north west][inner sep=0.75pt]    {$\dotsc $};

\end{tikzpicture}
         \caption{A doubly-graded version of Figure~\ref{fig:lie-algebra-act-on-rep} where $\mathrm{Lie}(U)$ is internally graded by a 1-PS $\lambda$ and grades $V$.}
         \label{fig:doubly-graded-hyperquiver}
\end{figure}

\subsection{Semistability of hyperquiver representations}{\label{sec:semistability-of-hyperquivers}}

Our affine GIT setting in this paper involves \textit{linear} actions of a reductive group $G$ on a vector space $V$, i.e., an action such that there exists a representation $\rho: G \rightarrow \mathrm{GL}(V)$ such that $g \cdot v = \rho(g)v$ for all $v \in V$. We will consider the group
$$G = \prod_{v \in \mathcal{V}} \mathrm{GL}(W_v)$$
acting on $V = R(\mathcal{H},\mathbf{d})$ via change of basis. Explicitly, if $\boldsymbol{\phi} = (\phi_e)_{e \in \mathcal{E}} \in R(\mathcal{H},\mathbf{d})$ and $\boldsymbol{g} = (g_v)_{v \in \mathcal{V}} \in G$, then
\begin{equation}{\label{eq:group-action-hyperquiver}}
    (\boldsymbol{g} \cdot \boldsymbol{\phi})_{e} = \left(\bigotimes_{t \in t(e)} g_{t}\right) \circ \phi_{e} \circ \left(\bigotimes_{s \in s(e)}(g_{s})^{-1}\right), \quad \forall e \in \mathcal{E}
\end{equation}
This action is linear as \eqref{eq:group-action-hyperquiver} is linear in $\phi_e$ and respects the product in $G$ by the \textit{mixed-product property} of tensors.

Let $\lambda : k^* \rightarrow G$ be a 1-PS. Then each component $\lambda_v : k^* \rightarrow \mathrm{GL}(W_v)$ of $\lambda$ is a (rational) representation of $k^*$, i.e., a representation that is also a homomorphism of linear algebraic groups. Every rational representation of $k^*$ splits into the direct sum of 1-dimensional representations where $\lambda(t)$ acts by $t^i$ for some $i \in \mathbb{Z}$. We therefore have a weight space decomposition
$$W_v = \bigoplus_{i \in I(\lambda)} W_v^{(i)}, \quad W_v^{(i)} = \{w \in W_v \;|\; \lambda(t) w = t^iw, \;\forall t\in k^*\}, \quad \forall v \in \mathcal{V}$$
where $I(\lambda) \subset \mathbb{Z}$ is the set of weights by which $\lambda$ acts. This can also be seen as a filtration $F^\bullet W = (F^\bullet W_v)_{v \in \mathcal{V}}$ of each vector space $W_v$ where $F^{j}W_v = \bigoplus_{i \in I(\lambda), i \geq j} W_v^{(i)}$. The subgroup $P(F^\bullet W)$ of $G$ preserving such a filtration is
\begin{equation}{\label{eq:parabolic-for-1-PS-intro}}
    P(F^\bullet W) = \{\boldsymbol{g} = (g_v)_{v \in V} \;|\; g_v(F^jW_v) \subseteq F^jW_v, \; \forall j \in \mathbb{Z}, \; \forall v \in \mathcal{V}\}
\end{equation}
A 1-PS $\lambda \in X_*(G)$ therefore uniquely determines a filtration $F^\bullet W$ together with its set of weights $I(\lambda)$, and conversely, the filtration $F^\bullet W$ and the weights $I(\lambda)$ uniquely determine $\lambda$ up to conjugation by $P(F^\bullet W)$. Additionally, for $e \in \mathcal{E}$, we have
\begin{align*}
    \mathrm{Hom}\left(\bigotimes_{s \in s(e)} W_{s}, \bigotimes_{t \in t(e)} W_{t}\right) &= \mathrm{Hom}\left(\bigotimes_{s \in s(e)} \bigoplus_{i \in \mathbb{Z}} W_{s}^{(i)}, \bigotimes_{t \in t(e)} \bigoplus_{j \in \mathbb{Z}} W_{t}^{(j)}\right) \\
    &= \bigoplus_{\substack{\mathbf{m} \in \mathbb{Z}^{|s(e)|} \\ \mathbf{n} \in \mathbb{Z}^{|t(e)|}}} \mathrm{Hom}\left(\bigotimes_{s \in s(e)} W_{s}^{(m_s)}, \bigotimes_{t \in t(e)} W_{t}^{(n_t)}\right)
\end{align*}
so by \eqref{eq:group-action-hyperquiver}, $\lambda(t)$ acts on the $(\mathbf{m},\mathbf{n})$-th component of $\phi_e$ by multiplying it by $t^{N(\mathbf{m},\mathbf{n})}$, where $N(\mathbf{m},\mathbf{n}) = \sum_{t \in t(e)} n_t - \sum_{s \in s(e)} m_s$. In particular, $\lim_{t \rightarrow 0} \lambda(t) \cdot \boldsymbol\phi$ exists if for all $(\mathbf{m},\mathbf{n})$-th components of $\phi_e$ which are non-zero, we have $N(\mathbf{m},\mathbf{n}) \geq 0$. We generalise these ideas into the following main objects of this paper.

\begin{definition}{\label{def:Q-filtration}}
Let $\mathcal{H} = (\mathcal{V},\mathcal{E})$ be a hyperquiver and $\mathbf{d} = (d_v)_{v \in \mathcal{V}}$ be a dimension vector. A \textbf{$\mathbb{Q}$-filtration} $(F^\bullet W,\boldsymbol{\ell})$ is
\begin{enumerate}
	\item a decreasing sequence of rational weights $\boldsymbol{\ell} = (\ell_k)_{k=1}^m$ with $\ell_1 > \cdots > \ell_m$, for some $m \geq 1$
	\item a filtration $F^\bullet W = (F^\bullet W_v)_{v \in \mathcal{V}}$ of each vector space $W_v$,
	$$\{0\} = U_{v,0} \subseteq U_{v,1} \subseteq \dots \subseteq U_{v,m} = W_v$$
	where $F^k W_v = U_{v,k}$, such that for all $k = 1,\ldots,m$, there is some $v \in \mathcal{V}$ with $U_{v,k-1} \subsetneq U_{v,k}$
\end{enumerate}    
\end{definition}

A $\mathbb{Q}$-filtration $(F^\bullet W,\boldsymbol{\ell})$ is said to be \textbf{trivial} if $m = 1$, $\ell_1 = 0$, and $\{0\} = F^0W_v \subseteq F^1W_v = W_v$ for all $v \in \mathcal{V}$.

\begin{definition}{\label{def:admissible-Q-filtration}}
A non-trivial $\mathbb{Q}$-filtration $(F^\bullet W,\boldsymbol{\ell})$ is said to be \textbf{admissible} for a representation $\boldsymbol{\phi} \in R(\mathcal{H},\mathbf{d})$ if for any edge $e \in \mathcal{E}$ and any choice of weights in the source $I : s(e) \rightarrow \{1,\ldots,m\}$, we have
\begin{equation}{\label{eq:admissible-filtration-property}}
\phi_e\left(\bigotimes_{s \in s(e)} U_{s, I(s)}\right) \subseteq \; \sum_{\substack{J : t(e) \rightarrow \{1,\ldots,m\} \\ \sum_{t \in t(e)} \ell_{J(t)} \ge \sum_{s \in s(e)} \ell_{I(s)}}} \; \bigotimes_{t \in t(e)} U_{t, J(t)}
\end{equation}
where the sum is over all choices of weights in the target $J : t(e) \rightarrow \{1,\ldots,m\}$.
\end{definition}

\begin{definition}
Let $\sim$ be the equivalence relation on the set $X_*(G) \times \mathbb{Q}_{> 0}$ where $(\lambda,q) \sim (\mu,r)$ if and only if there are $m,n \geq 1$ such that $\lambda^m = \mu^n$ and $\frac{q}{m} = \frac{r}{n}$. We define $X_*(G)_{\mathbb{Q}} := (X_*(G) \times \mathbb{Q}_{> 0})/\sim$ and call an element $[\lambda,q] \in X_*(G)_{\mathbb{Q}}$ a \textbf{rational 1-PS} of $G$.
\end{definition}

If $T$ is a maximal torus of $G$, then $X_*(T)$ is a free $\mathbb{Z}$-module of rank $\dim(T)$. Furthermore, the rational 1-PSs of $T$, as in the above definition, are in bijection with $X_*(T) \otimes_\mathbb{Z} \mathbb{Q}$.

\begin{remark}
A $\mathbb{Q}$-filtration of $\boldsymbol{\phi}$ corresponds to a rational 1-PS $[\lambda,q] \in X_*(G)_{\mathbb{Q}}$ of $G$, up to conjugation of $\lambda$ by $P(F^\bullet W)$, defined in \eqref{eq:parabolic-for-1-PS-intro}. A $\mathbb{Q}$-filtration admissible for $\boldsymbol{\phi}$ corresponds to a rational 1-PS $[\lambda,q] \in X_*(G)_{\mathbb{Q}}$ of $G$, up to conjugation of $\lambda$ by $P(F^\bullet W)$, which is admissible for $\boldsymbol{\phi}$, i.e., $\lim_{t \rightarrow 0} \lambda(t) \cdot \boldsymbol{\phi}$ exists.
\end{remark}

We will also consider a \textbf{rational character} $\chi_{\boldsymbol\theta} \in X^*(G)_{\mathbb{Q}}$ parametrised by a rational stability parameter $\boldsymbol{\theta} \in \mathbb{Q}^{|\mathcal{V}|}$, where $X^*(G)_{\mathbb{Q}} := X^*(G) \otimes \mathbb{Q}$. The canonical map $(\cdot,\cdot) : X^*(G) \times X_*(G) \rightarrow \mathbb{Z}$ can be extended rationally to $(\cdot,\cdot) : X^*(G)_{\mathbb{Q}} \times X_*(G)_{\mathbb{Q}} \rightarrow \mathbb{Q}$, where $(\chi \otimes q,[\lambda,r]) := qr(\chi,\lambda)$ and extending by linearity in the first argument. As $r$ is positive, this does not change the sign of $(\chi_{\boldsymbol{\theta}},\lambda)$. Furthermore, by Remark~\ref{rem:pairing-scaling-semistability}, we can extend the set of semistable points to rational characters $\chi_{\boldsymbol{\theta}}$ by defining $V^{\boldsymbol\theta\text{-}\mathrm{ss}} := V^{N\boldsymbol\theta\text{-}\mathrm{ss}}$ for any $N \geq 1$ making $N\boldsymbol{\theta}$ an integer vector.

The Hilbert-Mumford Criterion can therefore be extended to rational (co)characters. Hence, $\boldsymbol{\phi}$ is $\boldsymbol\theta$-semistable if and only if for all admissible $\mathbb{Q}$-filtrations $(F^\bullet \boldsymbol{\phi},\boldsymbol{\ell})$, we have
\begin{equation}{\label{eq:hilbert-mumford-filtration}}
    (\chi_{\boldsymbol{\theta}}, \lambda) = \sum_{v \in \mathcal{V}} \theta_v \sum_{k=1}^m \ell_k \dim(F^kW_v / F^{k-1}W_v) = \sum_{v \in \mathcal{V}} \theta_v w_v \geq 0
\end{equation}
where $w_v := \sum_{k=1}^m \ell_k \dim(F^kW_v / F^{k-1}W_v)$.

\subsection{Destabilising stability parameters}{\label{sec:destabilising-stability-params}}

In \cite{BehrendNoohi}, Behrend and Noohi studied moduli of graded algebras, which are essentially the same as the representation in Figure~\ref{fig:doubly-graded-hyperquiver} but with only the top row of vertices and blue edges. They imposed some additional conditions on their stability parameters $\boldsymbol\theta \in \mathbb{Z}^{|\mathcal{V}|}$ to ensure that the semistable locus was not empty for trivial reasons. This is due to the presence of central 1-PSs which destabilise every representation. In this paper, we do not impose any constraints on the stability parameters $\boldsymbol\theta$, nor are we concerned with whether the semistable locus is empty or not. However, in this section, we will investigate general conditions, applicable to all hyperquivers, for choosing stability parameters $\boldsymbol\theta$ which do not automatically make every point unstable. In doing so, we also generalise the semitability criterion \cite[Proposition 3.4]{BehrendNoohi} to more general hyperquiver representations.

Let $\mathcal{H}$ be a hyperquiver with no isolated vertices, i.e., every vertex is incident to some edge. Define an $|\mathcal{E}| \times |\mathcal{V}|$ matrix $M$ whose entries are
\begin{equation}{\label{eq:incidence-matrix}}
    M_{e,v} = \mathrm{mult}_{t(e)}(v) - \mathrm{mult}_{s(e)}(v), \quad \forall v \in \mathcal{V}, \; \forall e \in \mathcal{E}
\end{equation}
where $\mathrm{mult}_{t(e)}(v)$ and $\mathrm{mult}_{s(e)}(v)$ are the number of times $v$ appears in the tuples $t(e)$ and $s(e)$, respectively. Let $\lambda : k^* \rightarrow G$ be a 1-PS. If $\lambda$ acts trivially on all of $R(\mathcal{H},\mathbf{d})$, then it must be central, i.e.\ commute with all elements of $G$ for all $t \in k^*$. A central 1-PS $\lambda$ can be expressed as a tuple of integers $\boldsymbol{\gamma} = (\gamma_v)_{v \in \mathcal{V}} \in \mathbb{Z}^{|\mathcal{V}|}$, via
$$\lambda(t)_v = t^{\gamma_v}\cdot\mathrm{Id}_{W_v}, \quad \forall v \in \mathcal{V}$$
It acts trivially on $R(\mathcal{H},\mathbf{d})$ if and only if
$$\sum_{t \in t(e)} \gamma_t - \sum_{s \in s(e)} \gamma_s = 0, \quad \forall e \in \mathcal{E}$$
Therefore, the central 1-PSs of $G$ which act trivially on $R(\mathcal{H},\mathbf{d})$ form a central torus $T_{\mathrm{triv}}$ whose cocharacter lattice is $\mathrm{ker}(M)$. We thus consider 1-PSs of
$$\widetilde{G} = G / T_{\mathrm{triv}}$$
Let
\begin{align*}
    \chi_\theta: G &\longrightarrow k^* \\
    (g_v)_{v \in \mathcal{V}} &\mapsto \prod_{v \in \mathcal{V}}^{} \text{det}(g_v)^{\theta_v}
\end{align*}
be a character determined by $(\theta_v)_{v \in \mathcal{V}} \in \mathbb{Z}^{|\mathcal{V}|}$. Then for $\chi_\theta$ to descend to a character $\widetilde{\chi_\theta} : \widetilde{G} \rightarrow k^*$, we require that
\begin{equation}{\label{eq:character-descent}}
    \sum_{v \in \mathcal{V}} \theta_v d_v \gamma_v = 0, \quad \forall \boldsymbol{\gamma} \in \mathrm{ker}(M)    
\end{equation}
In fact, \eqref{eq:hilbert-mumford-filtration} is invariant under addition of central 1-PSs $\boldsymbol{\gamma} \in \mathrm{ker}(M)$. This is because if $\lambda'$ is $\lambda$ shifted by $\boldsymbol{\gamma}$, then the corresponding $w_v'$ is$$w_v' = \sum_{n \in \mathbb{Z}} (n + \gamma_v)\dim(W_v^{(n)}) = w_v + d_v\gamma_v$$and so\begin{align*}    (\chi_\theta, \lambda') = \sum_{v \in \mathcal{V}} \theta_v(w_v + d_v\gamma_v) = \sum_{v \in \mathcal{V}}\theta_vw_v + \underbrace{\sum_{v \in \mathcal{V}}\theta_vd_v\gamma_v}_{= 0} = \sum_{v \in \mathcal{V}}\theta_vw_v\end{align*}by \eqref{eq:character-descent}. Thus, $\boldsymbol{\phi}$ is semistable under the action of $\widetilde{G}$ if and only if the same above conditions hold.

The condition \eqref{eq:character-descent} is a general form of condition (2) of \cite[Definition 3.2]{BehrendNoohi}
Let $\mathbf{a} = (a_v)_{v \in \mathcal{V}}$ where $a_v := \theta_v d_v$ for all $v \in \mathcal{V}$. Then \eqref{eq:character-descent} says that $\mathbf{a} \in \mathrm{ker}(M)^{\perp} = \mathrm{Im}(M^T)$. Thus, there exists an $\boldsymbol{\eta} = (\eta_e)_{e \in \mathcal{E}}$ such that $\mathbf{a} = M^T\boldsymbol{\eta}$, i.e.
\begin{equation}{\label{eq:a_v-eta_e}}
    a_v = \sum_{e \in \mathcal{E}} \eta_e M_{e,v}, \quad \forall v \in \mathcal{V}
\end{equation}

More generally, a central 1-PS parameterised by $\boldsymbol{\gamma} \in \mathbb{Z}^{|\mathcal{V}|}$ will have the property that $\lim_{t \rightarrow 0} \lambda(t) \cdot \boldsymbol{\phi}$ exists for all $\boldsymbol{\phi} \in R(\mathcal{H},\mathbf{d})$ if and only if it acts on $R(\mathcal{H},\mathbf{d})$ with non-negative weights on every hyperedge, i.e.\ $M\boldsymbol{\gamma} \geq 0$ component-wise. If we choose stability parameters $(\theta_v)_{v \in \mathcal{V}}$ such that $\langle \boldsymbol{}\theta, \boldsymbol{\gamma}\rangle < 0$ for such an admissible central 1-PS, then every point of $R(\mathcal{H},\mathbf{d})$ would be unstable. Thus, to prevent the semistable locus from being empty for trivial reasons, we require the parameter feasibility condition:
\begin{equation}{\label{eq:farkas-lemma-stability}}
    \mathbf{a}^T\boldsymbol{\gamma} \geq 0,\text{ for all }\boldsymbol{\gamma}\text{ s.t. }M\boldsymbol{\gamma} \geq 0
\end{equation}
Notice that \eqref{eq:character-descent} is a special case of \eqref{eq:farkas-lemma-stability}. By Farkas' Lemma, $\mathbf{a}$ satisfies \eqref{eq:farkas-lemma-stability} if and only if there exists an $\boldsymbol{\eta} \geq 0$ such that $\mathbf{a} = M^T\boldsymbol{\eta}$. Thus, we can always choose $\eta_e \geq 0$ for all $e \in \mathcal{E}$ in \eqref{eq:a_v-eta_e}, and as $\mathbf{a}$ and $M$ are integer-valued, we can take a rational such $\boldsymbol{\eta}$.

Now writing $\theta_v = \frac{a_v}{d_v}$ for $v \in \mathcal{V}$ and substituting \eqref{eq:a_v-eta_e} and the definition of $M$ \eqref{eq:incidence-matrix}, we see that
\begin{align*}
    \sum_{v \in \mathcal{V}} \theta_v w_v = \sum_{v \in \mathcal{V}} a_v \frac{w_v}{d_v} &= \sum_{v \in V} \left(\sum_{e \in \mathcal{E}} \eta_e M_{e,v}\right) \frac{w_v}{d_v}
    = \sum_{e \in \mathcal{E}} \eta_e \left(\sum_{v \in \mathcal{V}} M_{e,v} \frac{w_v}{d_v}\right) \\
    &= \sum_{e \in \mathcal{E}} \eta_e \left(\sum_{t \in t(e)} \frac{w_t}{d_t} - \sum_{s \in s(e)} \frac{w_s}{d_s}\right).
\end{align*}
Therefore, the semistabilty condition \eqref{eq:hilbert-mumford-filtration} becomes
\begin{align}
    \sum_{e \in \mathcal{E}} \eta_e \left(\sum_{t \in t(e)} \frac{w_t}{d_t} - \sum_{s \in s(e)} \frac{w_s}{d_s}\right) \geq 0 {\label{eq:slope-stability-condition-eta}}
\end{align}
Thus, we have a duality: one can assign stability parameters $(\theta_v)_{v \in \mathcal{V}}$ to every vertex and check semistability using the Hilbert-Mumford criterion \eqref{eq:hilbert-mumford-filtration}, or one can assign non-negative stability parameters $(\eta_e)_{e \in \mathcal{E}}$ to every hyperedge and check semistability using the condition \eqref{eq:slope-stability-condition-eta}. The conditions \eqref{eq:farkas-lemma-stability} and \eqref{eq:hilbert-mumford-filtration} together generalise~\cite[Proposition 3.4]{BehrendNoohi} to arbitrary hyperquiver representations.

\begin{proposition}
Let $\boldsymbol{\theta} \in \mathbb{Z}^{|\mathcal{V}|}$ be a stability paramater such that
$$\sum_{v \in \mathcal{V}} \theta_v d_v \gamma_v = 0, \text{ for all }\boldsymbol{\gamma} \text{ s.t. } M\gamma \geq 0.$$
Then $\boldsymbol{\phi} \in R(\mathcal{H},\mathbf{d})$ is $\boldsymbol{\theta}$-semistable if and only if for all $\mathbb{Q}$-filtrations $(F^\bullet W,\boldsymbol{\ell})$ admissible for $\boldsymbol\phi$, we have
$$\sum_{v \in \mathcal{V}} \theta_v w_v \geq 0$$
where $w_v := \sum_{k=1}^m \ell_k \dim(F^kW_v / F^{k-1}W_v)$. Equivalently, if $\boldsymbol{\eta} \in \mathbb{Q}_{\geq 0}^{|\mathcal{E}|}$ is a non-negative stability parameter on edges, then $\boldsymbol\phi$ is $\boldsymbol\eta$-semistable if and only if for all $\mathbb{Q}$-filtrations $(F^\bullet W,\boldsymbol{\ell})$ admissible for $\boldsymbol\phi$, we have
\begin{equation*}
\sum_{e \in \mathcal{E}} \eta_e \left(\sum_{t \in t(e)} \frac{w_t}{d_t} - \sum_{s \in s(e)} \frac{w_s}{d_s}\right) \geq 0. \eqdiamond  
\end{equation*}
\end{proposition}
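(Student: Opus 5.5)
The plan is to combine the rational form of the affine Hilbert--Mumford Criterion with the translation between admissible rational 1-PSs and admissible $\mathbb{Q}$-filtrations, and then to rewrite the pairing using the edge parameters $\boldsymbol\eta$ from Farkas' Lemma.

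First, I will prove the vertex form. By Remark~\ref{rem:pairing-scaling-semistability}, $\boldsymbol\phi$ is $\boldsymbol\theta$-semistable if and only if it is $N\boldsymbol\theta$-semistable for any $N \geq 1$. So we may apply the affine Hilbert--Mumford Criterion to the integral character $\chi_{N\boldsymbol\theta}$. It says that semistability is equivalent to $(\chi_{\boldsymbol\theta},\lambda) \geq 0$ for all nontrivial admissible $\lambda \in C_{\boldsymbol\phi}$. Rescaling by positive rationals does not change signs, so the same condition holds for all admissible rational 1-PSs.

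Next I use the remark identifying admissible rational 1-PSs, up to conjugation by $P(F^\bullet W)$, with admissible $\mathbb{Q}$-filtrations. I would check that this correspondence is really a bijection on the level needed. Given $\lambda$, I diagonalise it and take $F^jW_v$ to be the sum of the weight spaces of weight at least $j$. Relabelling the distinct weights in decreasing order as $\ell_1>\cdots>\ell_m$ gives the filtration indexed by $k$. Admissibility of $\lambda$ means every nonzero $(\mathbf m,\mathbf n)$-component of $\phi_e$ has $N(\mathbf m,\mathbf n)\ge 0$. This is exactly the containment \eqref{eq:admissible-filtration-property}, once the target is decomposed into weight components.

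Conversely, given an admissible $\mathbb{Q}$-filtration, I choose splittings $W_v=\bigoplus_k W_{v,k}$ with $U_{v,k}=\bigoplus_{i\le k}W_{v,i}$. Then I let $\lambda$ act by $t^{N\ell_k}$ on $W_{v,k}$, where $N$ clears denominators. Both conditions are independent of the splitting, so this direction also works. I expect this dictionary to be the main technical point: the ordering conventions for $U_{v,k}$ versus weight spaces have to be matched carefully, and nontriviality of the filtration has to correspond to nontriviality of $\lambda$.

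Once the dictionary is in place, the pairing is computed on the diagonal torus. It equals $\sum_v\theta_v\sum_k \ell_k\dim(U_{v,k}/U_{v,k-1})=\sum_v\theta_v w_v$, because $\det(\lambda(t)_v)=t^{w_v}$. This is \eqref{eq:hilbert-mumford-filtration} and proves the first equivalence.

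For the edge form, the hypothesis is in particular \eqref{eq:farkas-lemma-stability}. By Farkas' Lemma we can then write $\mathbf a=M^T\boldsymbol\eta$ with $\boldsymbol\eta\ge 0$, where $a_v=\theta_vd_v$. Conversely, any $\boldsymbol\eta\ge0$ defines $\theta_v=(M^T\boldsymbol\eta)_v/d_v$, and this satisfies \eqref{eq:farkas-lemma-stability} automatically. In either case, the computation displayed before the proposition turns $\sum_v\theta_vw_v$ into $\sum_e\eta_e(\sum_{t}w_t/d_t-\sum_s w_s/d_s)$. It does so by substituting $M_{e,v}=\mathrm{mult}_{t(e)}(v)-\mathrm{mult}_{s(e)}(v)$ and exchanging the order of summation. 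Hence the two criteria coincide, and $\boldsymbol\eta$-semistability means $\boldsymbol\theta$-semistability for this induced $\boldsymbol\theta$.
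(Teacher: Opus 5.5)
Your proposal is correct and takes the same route as the paper. The vertex form comes from the rational Hilbert--Mumford criterion translated into admissible $\mathbb{Q}$-filtrations, and the edge form from Farkas' Lemma, the substitution $a_v=\theta_vd_v=(M^T\boldsymbol\eta)_v$, and an exchange of summation. Beyond the paper, you spell out the rational 1-PS/$\mathbb{Q}$-filtration dictionary that the paper only asserts in a remark, and you correctly note that the vertex criterion does not use the hypothesis, whose stated ``$=0$'' form is needed only in the weaker form \eqref{eq:farkas-lemma-stability}.
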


\begin{example}
Let $\mathcal{H} = (\mathcal{V},\mathcal{E})$ be the hyperquiver with hyperedge $(1,2) \rightarrow 3$ and consider a non-zero representation $\phi_e: U_1 \otimes V_1 \rightarrow V_2$. Here, $M = (-1,-1 ,1)$ and its kernel is spanned by $(1,1,2)^T$ and $(0,1,1)^T$, corresponding to the actions of the central 1-PSs $\Gamma_1$ and $\Gamma_2$. If $\chi_\theta$ is a character for stability parameters $(\theta_v)_{v \in \mathcal{V}}$, then the descent conditions \eqref{eq:character-descent} give us
\begin{align*}
    \langle \chi_\theta, \Gamma_1 \rangle &= \theta_1^Ud_1^U + \theta_1^Vd_1^V + 2\theta_2^Vd_2^V = 0 \\
    \langle \chi_\theta, \Gamma_2 \rangle &= \theta_1^Vd_1^V + \theta_2^Vd_2^V = 0 
\end{align*}
Consider the central 1-PS $\Delta = (-1,-1,-1)^T$. Since $M\Delta = 1 \ge 0$, the condition \eqref{eq:farkas-lemma-stability} yields
\begin{align*}
    \langle \chi_\theta, \Delta \rangle &=- \theta_1^Ud_1^U - \theta_1^Vd_1^V - \theta_2^Vd_2^V = - \theta_1^Ud_1^U \ge 0
\end{align*}
Since we may rescale the stability parameters $(\theta_v)_{v \in \mathcal{V}}$ by a positive rational number without changing the inequality \eqref{eq:hilbert-mumford-filtration}, we may assume $\theta_1^Ud_1^U = -1$. Then solving this system of equations yields
\begin{equation*}
    \theta_1^U = -\frac{1}{d_1^U}, \quad \theta_1^V = -\frac{1}{d_1^V}, \quad \theta_2^V = \frac{1}{d_2^V}
\end{equation*}
and substituting these into \eqref{eq:hilbert-mumford-filtration}, we have
\begin{equation*}
    \frac{w_2^V}{d_2^V} - \frac{w_1^U}{d_1^U} - \frac{w_1^V}{d_1^V} \geq 0
\end{equation*}
which is precisely the slope stability condition \eqref{eq:slope-stability-condition-eta} with $\eta_e = 1 > 0$.
\end{example}

\begin{example}
Let $\mathcal{H} = (\mathcal{V},\mathcal{E})$ be the hyperquiver with hyperedge $(1,1) \rightarrow 1$ and consider a non-zero representation $\phi_e: V \otimes V \rightarrow V$ representing an associative $k$-algebra. Here, $M = -1$ and its kernel is $\{0\}$. Thus, \eqref{eq:character-descent} is trivially satisfied. The central 1-PS $\gamma = -1$ satisfies $M\gamma = 1 \geq 0$, so \eqref{eq:farkas-lemma-stability} implies $-a_1 \ge 0$, and thus $\theta \le 0$. To avoid a trivially empty semistable locus, we must have a stability parameter $\theta < 0$. This forces $\eta > 0$. The condition \eqref{eq:slope-stability-condition-eta} reduces to
\begin{equation*}
    -\frac{w}{d} \geq 0
\end{equation*}
for all admissible filtrations. The representation $(V,\phi_e)$ is semistable if and only if it is a semisimple algebra, i.e., contains no non-zero nilpotent ideals \cite[Theorem 1.1]{jones2024canonicalfiltrationsfinitedimensionalalgebras}.
\end{example}

\begin{example}{\label{ex:hyperquiver-3-step}}
Let $k = \mathbb{C}$ and consider the hyperquiver representation in Figure \ref{fig:no-2-step-filtration}, where $W$ is a 3-dimensional vector space with basis $\{x,y,z\}$, given by

\begin{equation*}
\begin{gathered}
	A: W \longrightarrow W \\
	A(x) = x, \quad A(y) = 2y, \quad A(z) = 3z,
\end{gathered}
\end{equation*}
\medskip
\begin{equation*}
\begin{gathered}
	B: W^{\otimes 3} \longrightarrow W^{\otimes 2} \\
	B(x \otimes z \otimes z) = y \otimes y, \quad 0 \text{ on all other basis vectors},
\end{gathered}
\end{equation*}
\medskip
\begin{equation*}
\begin{gathered}
	C: W^{\otimes 2} \longrightarrow W^{\otimes 3}\\
	C(y \otimes y) = x \otimes z \otimes z, \quad 0 \text{ on all other basis vectors},
\end{gathered}
\end{equation*}
\begin{equation*}
\begin{gathered}
	D: W \longrightarrow W^{\otimes 2} \\
	D(x) = x \otimes x + z \otimes z, \quad D(y) = D(z) = 0.
\end{gathered}
\end{equation*}

\medskip

\begin{figure}
    \centering
\begin{tikzpicture}[
    scale=0.9,
    strand/.style={line width=1.6pt, line cap=butt, line join=round},
    hqbarb/.style={postaction={decorate, decoration={markings,
      mark=at position #1 with {
        \draw[line width=1.6pt, line cap=butt]
          (-9.56pt, 3.12pt) .. controls (-6.48pt, 1.91pt) and (-3.24pt, 0.77pt) .. (0pt,0pt)
                            .. controls (-3.24pt,-0.77pt) and (-6.48pt,-1.91pt) .. (-9.56pt,-3.12pt);
      }}}},
  ]
  \begin{scope}[rotate=0, hqblue, strand]
    \draw (0,0) .. controls (1.567,-1.160) and (3.70,-1.95) .. (3.70,0);
    \draw (0,0) .. controls (1.745,-0.870) and (3.70,-1.40) .. (3.70,0);
    \draw (0,0) .. controls (1.870,-0.554) and (3.70,-0.85) .. (3.70,0);
    \draw[hqbarb=0.85] (3.70,0) .. controls (3.70, 0.50) and (1.944, 0.153) .. (0,0);
    \draw[hqbarb=0.85] (3.70,0) .. controls (3.70, 1.95) and (1.567, 1.160) .. (0,0);
  \end{scope}
  \begin{scope}[rotate=88, hqorange, strand]
    \draw (0,0) .. controls (1.700, 0.848) and (3.50, 1.80) .. (3.50,0);
    \draw[hqbarb=0.85] (3.50,0) .. controls (3.50,-0.55) and (1.891,-0.182) .. (0,0);
    \draw[hqbarb=0.85] (3.50,0) .. controls (3.50,-1.80) and (1.700,-0.848) .. (0,0);
  \end{scope}
  \begin{scope}[rotate=182, hqgreen, strand]
    \draw (0,0) .. controls (1.379,-1.379) and (3.70,-1.95) .. (3.70,0);
    \draw (0,0) .. controls (1.635,-1.062) and (3.70,-1.35) .. (3.70,0);
    \draw[hqbarb=0.85] (3.70,0) .. controls (3.70, 0.45) and (1.950, 0.034) .. (0,0);
    \draw[hqbarb=0.85] (3.70,0) .. controls (3.70, 1.15) and (1.795, 0.762) .. (0,0);
    \draw[hqbarb=0.85] (3.70,0) .. controls (3.70, 1.95) and (1.379, 1.379) .. (0,0);
  \end{scope}
  \begin{scope}[rotate=268, hqred, strand]
    \draw[postaction={decorate, decoration={markings,
      mark=at position 0.999 with {
        \draw[line width=1.6pt, line cap=butt]
          (-13.76pt, 3.12pt) .. controls (-10.68pt, 1.91pt) and (-7.44pt, 0.77pt) .. (-4.20pt,0pt)
                             .. controls (-7.44pt,-0.77pt) and (-10.68pt,-1.91pt) .. (-13.76pt,-3.12pt);
      }}}]
      (0,0) .. controls (1.449,-0.388) and (3.00,-1.10) .. (3.00,0)
            .. controls (3.00, 1.10) and (1.449, 0.388) .. (0,0);
  \end{scope}
  \filldraw[hqvert, draw=black, line width=0.8pt] (0,0) circle (4pt);
  \node at (302:1.15) {\phantom{h}};
\end{tikzpicture}
    \caption{An unstable hyperquiver representation with a destabilising 3-step filtration and no 2-step filtrations detecting instability}
    \label{fig:no-2-step-filtration}
\end{figure}

\noindent We show that for $\theta = 1$, i.e., $\chi_\theta(g) = \mathrm{det}(g)$ for $g \in \mathrm{GL}(W)$, this representation has a destabilising admissible 3-step filtration, while all of its admissible 2-step filtrations satisfy the Hilbert-Mumford inequality and therefore cannot detect instability. This demonstrates that instability of hyperquiver representations cannot be tested on 2-step filtrations alone, unlike for quiver representations.

We first characterise every admissible filtration. Any admissible filtration $F^\bullet W = (F^pW)_{p=0}^m$ must satisfy $m \leq 3$, since $\dim(W) = 3$, and $A(F^pW) \subseteq F^pW$, along with similar conditions for $B$, $C$, and $D$. This shows that every $F^pW$ is an $A$-invariant subspace of $W$, and as $A$ has three distinct eigenvalues corresponding to the basis $\{x,y,z\}$, every $F^pW$ must be the span of a subset of this basis. Since a 1-PS $\lambda : \mathbb{C}^* \rightarrow \mathrm{GL}(W)$ corresponding to it acts with some weight on each $F^pW$, we may assume without loss of generality that $\lambda(t) = \mathrm{diag}(t^a,t^b,t^c)$ acts diagonally in this basis. Conjugating the edge maps by $\lambda$ add weights in the targets and subtracts weights in the sources, and admissibility of the filtration is equivalent to non-negativity of the sum of these weights. Thus we obtain the following inequalities: none for $A$ as it is diagonal, and for the remaining maps we have

\[
\llap{$\begin{array}{@{}l@{\;\;} r@{\;}c@{\;}l@{}}
  \strut B: & x \otimes z \otimes z & \mapsto & y \otimes y \\[2pt]
  \strut C: & y \otimes y & \mapsto & x \otimes z \otimes z \\[2pt]
  \multirow{2}{*}{$D:$} & \strut x & \mapsto & x \otimes x \\[2pt]
   & \strut x & \mapsto & z \otimes z
\end{array}$\;}
\begin{array}{@{}c@{}} \strut\iff \\[2pt] \strut\iff \\[2pt] \strut\iff \\[2pt] \strut\iff \end{array}
\rlap{\;\;$\begin{array}{@{}r@{\;}c@{\;}l@{}}
  \strut 2b - (a+2c) & \geq & 0 \\[2pt]
  \strut (a+2c) - 2b & \geq & 0 \\[2pt]
  \strut 2a - a & \geq & 0 \\[2pt]
  \strut 2c - a & \geq & 0
\end{array}$}
\]
Combining these inequalities we have $a - 2b + 2c = 0$, $a \geq 0$, and $2c - a \geq 0$. Rewriting $\alpha := \frac{2c - a}{2} \geq 0$ and $\beta := c \geq 0$, we have $a = -2\alpha + 2\beta$ and $b = -\alpha + 2\beta$, so that $(a,b,c) = \alpha(-2,-1,0) + \beta(2,2,1)$. Thus, the cone of admissible rational 1-PSs is $\mathbb{Q}_{\geq 0}\cdot(-2,-1,0) + \mathbb{Q}_{\geq 0}\cdot(2,2,1)$

Since $m \leq 3$, an admissible 2-step filtration corresponds to a 1-PS $\lambda$ where exactly two of $a$, $b$, and $c$ coincide. If $a = b$, then $\alpha = 0$, giving the ray spanned by $(2,2,1)$. If $b = c$, then $\alpha = \beta$, giving us the ray spanned by $(0,1,1)$. If $a = c$, we have $\beta = 2\alpha$, giving us the ray spanned by $(2,3,2)$. On all three points, we have $(\chi_\theta, \lambda) = a + b + c > 0$, which does not violate the Hilbert-Mumford inequality and thus does not detect instability. However, with $\beta = 0$ on the ray spanned by $(-2,-1,0)$, we have filtrations with $m = 3$ steps as there are three distinct weights $-2$, $-1$, and 0, and the filtration corresponding to this point is destabilising since $(\chi_\theta, \lambda) = (-2) + (-1) + 0 = -3 < 0$, violating the Hilbert-Mumford Criterion.
\end{example}

\section{The Harder-Narasimhan filtration}\label{sec:hn-filtration}

\subsection{Instability}{\label{sec:instability}}

We return to reviewing background from affine GIT. In Section~\ref{sec:affine-GIT}, we studied semistable points. In this section, we study unstable points. As before, our setting is a reductive group $G$ acting on a vector space $V$ together with a $G$-linearisation of the trivial line bundle $L$ over $V$ determined by a character $\chi : G \rightarrow k^*$.

\begin{definition}
Let $\lambda \in X_*(G)$ be a 1-PS of $G$. We define the \textbf{parabolic subgroup} $P(\lambda)$ associated to $\lambda$ to be
\begin{equation}{\label{eq:parabolic-subgroup-for-1-PS}}
P(\lambda) := \left\{g \in G \;\middle|\; \lim_{t \rightarrow 0} \lambda(t) g  \lambda(t)^{-1} \text{ exists in }G\right\},    
\end{equation}
and the \textbf{Levi subgroup} $L(\lambda)$ associated to $\lambda$ to be
\begin{equation}{\label{eq:levi-subgroup}}
L(\lambda) := Z_G(\lambda) = \left\{g \in G \;\middle|\; \lambda(t)g\lambda(t)^{-1} = g, \,\forall t \in k^*\right\}.
\end{equation}
We define $\pi_\lambda : P(\lambda) \rightarrow L(\lambda)$ to be the homomorphism $g \mapsto \lim_{t \rightarrow 0} \lambda(t) g  \lambda(t)^{-1}$, and the \textbf{unipotent subgroup} $U(\lambda) := \mathrm{ker}(\pi_{\lambda})$.
\end{definition}

\begin{remark}{\label{rem:levi-torus-1-PS}}
The unipotent subgroup $U(\lambda)$ is equal to the unipotent radical of $P(\lambda)$, and we have a Levi decomposition $P(\lambda) = U(\lambda) \rtimes L(\lambda)$ where $L(\lambda)$ acts by conjugation on $U(\lambda)$. If $\lambda \in X_*(G)$ is a 1-PS of $G$, then since $\lambda$ commutes with itself, we also have $\lambda \in X_*(L(\lambda))$ a 1-PS of $L(\lambda)$. Additionally, if $T$ is a maximal torus of $G$ containing $\lambda$, then since $T$ is abelian, $\lambda \in X_*(T)$ is also a 1-PS of $T \subseteq L(\lambda)$.
\end{remark}

As we saw in Section~\ref{sec:semistability-of-hyperquivers}, if $G = \prod_{v \in \mathcal{V}} \mathrm{GL}(W_v)$, then a 1-PS $\lambda \in X_*(G)$ determines a weight space decomposition described by a filtration $F^\bullet W = (F^\bullet W_v)_{v \in \mathcal{V}}$, and the subgroup preserving the filtration $F^\bullet W$ is $P(F^\bullet W)$ in \eqref{eq:parabolic-for-1-PS-intro}. In this setting, $P(F^\bullet W)$ is the same as the parabolic subgroup $P(\lambda)$ in \eqref{eq:parabolic-subgroup-for-1-PS}. When written in coordinates by choosing bases for each $W_v$, the components $g_v$ of $\boldsymbol{g} \in G$ become block upper triangular matrices whose diagonal blocks have sizes $\dim W_v^{(i)}$. The unipotent subgroup $U(\lambda)$ consists of block upper unitriangular matrices, i.e., block upper triangular matrices whose diagonal blocks are the identity. Furthermore, the map $\pi_\lambda$ sends every entry strictly above the block diagonal of an element of $P(\lambda)$ to 0. Thus, the Levi subgroup $L(\lambda)$ consists of block diagonal matrices.

We would like to measure the degree to which a point is unstable, and can do that as follows. Let $T$ be a maximal torus in $G$. The Weyl group of $G$ with respect to $T$ is $W_G(T) := N_G(T)/T$, and $w \in W_G(T)$ acts on $T$ by conjugation, hence on 1-PSs $\lambda \in X_*(T)$ by conjugation $\lambda(t) \mapsto w \cdot \lambda(t)$, and on characters $\chi \in X^*(T)$ by pre-conjugation $\chi(\tau) \mapsto \chi(w^{-1} \cdot \tau)$. One can equip the 1-PSs $X_*(T)$ of $T$ with a $W_G(T)$-invariant symmetric positive-definite bilinear form $\langle \cdot, \cdot \rangle : X_*(T) \times X_*(T) \rightarrow \mathbb{Z}$, meaning for any $w \in W_G(T)$ and $\lambda,\lambda' \in X_*(T)$, we have $\langle w\cdot\lambda,w\cdot\lambda'\rangle = \langle \lambda, \lambda'\rangle$. It follows that the Weyl-invariant function $\|\cdot\| : X_*(T) \rightarrow \mathbb{R}_{\geq 0}$ defined by $\|\lambda\| := \langle \lambda, \lambda \rangle^{\frac{1}{2}}$ extends uniquely to a $G$-conjugation invariant function $\|\cdot\| : X_*(G) \rightarrow \mathbb{R}_{\geq 0}$ called a \textit{length function} for 1-PSs of $G$. Similar to the length function, the canonical map $(\cdot,\cdot) : X^*(G) \times X_*(G) \rightarrow \mathbb{Z}$ is $G$-conjugation invariant in its second argument, i.e., $(\chi,g\lambda g^{-1}) = (\chi,\lambda)$ for all $g \in G$. Both also satisfy  $(\chi,\lambda^n) = n(\chi,\lambda)$ and $\|\lambda^n\| = n\|\lambda\|$ for all $n \geq 1$, so their ratio is unchanged when scaling $\lambda$ by a positive integer.

Having such a form and length function allows us to assign canonical 1-PSs to unstable points and, when the action of $G$ on $V$ is linear, decompose the set of unstable points in a canonical manner.

\begin{theorem}[Affine Kempf Instability Theorem {\cite[Theorem 6]{kempfInstability}}]
Let $G$ be a reductive group acting on a vector space $V$, and let $\mathscr{L}$ be the $G$-linearisation corresponding to a character $\chi \in X^*(G)$. Let $\|\cdot\| : X_*(G) \rightarrow \mathbb{R}_{\geq 0}$ be a length function induced by a $W_G(T)$-invariant symmetric positive-definite bilinear form $\langle \cdot, \cdot \rangle : X_*(T) \times X_*(T) \rightarrow \mathbb{Z}$, for some maximal torus $T$ of $G$. If $v \in V$ is an unstable point, then there exists a $\lambda \in X_*(G)$, unique up to scaling by a positive integer and conjugation by the parabolic subgroup $P(\lambda)$ of $G$ associated to $\lambda$, minimising
\begin{equation}{\label{eq:kempf-functional}}
    K_\chi(\lambda) := \frac{(\chi, \lambda)}{\|\lambda\|}
\end{equation}
over $\lambda \in C_v$. In particular, there exists an indivisible such $\lambda$ unique up to conjugation by $P(\lambda)$.
\end{theorem}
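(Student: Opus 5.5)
The plan is to follow Kempf's original strategy. First solve the problem inside a single maximal torus, where it becomes a strictly convex optimisation problem on a rational polyhedral cone. Then pass to all of $G$ by conjugation, using a finiteness argument for existence and the fact that two parabolic subgroups always share a maximal torus for uniqueness. Throughout I work with rational 1-PSs $X_*(G)_{\mathbb{Q}}$. The functional $K_\chi$ is invariant under positive rescaling, so at the end one clears denominators and takes the unique indivisible representative on the optimal ray.

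\emph{Step 1 (torus case).} Fix a maximal torus $T$ and decompose $v=\sum_{\alpha} v_\alpha$ into $T$-weight vectors. A rational $\lambda\in X_*(T)_{\mathbb{Q}}$ is admissible for $v$ exactly when $(\alpha,\lambda)\ge 0$ for every $\alpha$ in the weight support $\Sigma_T(v)=\{\alpha : v_\alpha\neq 0\}$. So the admissible set $C_{v,T}$ is a rational polyhedral cone. If it contains some $\lambda$ with $(\chi,\lambda)<0$, then minimising $K_\chi$ on $C_{v,T}$ is equivalent to minimising $\langle\lambda,\lambda\rangle$ over the convex set $C_{v,T}\cap\{(\chi,\lambda)\le -1\}$. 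Since $\langle\cdot,\cdot\rangle$ is positive definite, this is strictly convex, so it has a unique minimiser over the reals. The minimiser is rational: it is the orthogonal projection onto a face of a polyhedron cut out by rational data. Concretely, the KKT conditions reduce it to a linear system with rational coefficients on the active face.

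\emph{Step 2 (existence over $G$).} Every 1-PS lies in some maximal torus, and all maximal tori are conjugate to $T$. Moreover $(\chi,g\lambda g^{-1})=(\chi,\lambda)$, $\|g\lambda g^{-1}\|=\|\lambda\|$, and $g\lambda g^{-1}$ is admissible for $v$ iff $\lambda$ is admissible for $g^{-1}v$. Hence
\[
\inf_{C_v}K_\chi=\inf_{g\in G}\ \min_{C_{g^{-1}v,T}}K_\chi .
\]
The cone $C_{g^{-1}v,T}$ depends only on the support $\Sigma_T(g^{-1}v)$, which is a subset of the finite set of $T$-weights of $V$. So only finitely many cones occur, and the infimum is a minimum over finitely many values from Step 1. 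Instability, via the affine Hilbert--Mumford Criterion, guarantees that this value is negative, so Step 1 applies to at least one cone. This produces a minimising rational $\lambda$, and scaling gives an integral one.

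\emph{Step 3 (uniqueness up to $P(\lambda)$).} I expect this to be the main obstacle. It has two ingredients.

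The first is that $P(\lambda)$-conjugation preserves admissibility. If $\lambda$ is admissible for $v$ and $p\in P(\lambda)$, then
\[
p\lambda(t)p^{-1}v=p\,\bigl(\lambda(t)p^{-1}\lambda(t)^{-1}\bigr)\,\lambda(t)v .
\]
Both factors have limits as $t\to 0$, so $p\lambda p^{-1}\in C_v$, with the same value of $K_\chi$.

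The second is the structure-theory fact that any two parabolic subgroups $P(\lambda)$ and $P(\lambda')$ contain a common maximal torus $T'$; this follows from the Bruhat decomposition.

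Now let $\lambda,\lambda'$ both be minimisers. Choose $p\in P(\lambda)$ and $p'\in P(\lambda')$ with $p\lambda p^{-1},\,p'\lambda' p'^{-1}\in X_*(T')$. Both are admissible for $v$ and optimal, so both lie in the convex cone $C_{v,T'}$ and both minimise there. By uniqueness in Step 1 they agree up to positive scaling; call the common ray $\mu$. Then $P(\lambda)=P(\mu)=P(\lambda')$, so $p'^{-1}p\in P(\lambda)$ and $\lambda'$ is a positive multiple of $(p'^{-1}p)\lambda(p'^{-1}p)^{-1}$. This gives uniqueness up to scaling and $P(\lambda)$-conjugation, and the indivisible representative is then unique up to $P(\lambda)$-conjugation.
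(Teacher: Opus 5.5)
The paper does not prove this statement; it cites it from Kempf. Your proposal is a correct reconstruction of Kempf's own argument: strict convexity gives a unique rational minimising ray on the polyhedral cone in a fixed torus, the finitely many possible supports $\Sigma_T(g^{-1}v)$ give existence, and a maximal torus common to $P(\lambda)$ and $P(\lambda')$ gives uniqueness. The paper reuses these same ingredients in its proof of Theorem~\ref{thm:hn-filtration}, where the KKT conditions play the role of your Step 1 and the common maximal torus of two parabolics again yields uniqueness.
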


Recall that a 1-PS $\lambda \in X_*(G)$ is called \textbf{indivisible} if there does not exist a 1-PS $\mu \in X_*(G)$ and an $n > 1$ such that $\lambda(t) = \mu(t^n)$ for all $t \in k^*$. The function \eqref{eq:kempf-functional} is called the \textbf{Kempf functional}.

\begin{definition}
For an unstable point $v \in V$, let $M_\chi(v) := \inf_{\substack{\lambda \in C_v}} K_\chi(\lambda)$ be the minimum. We call a 1-PS $\lambda \in C_v$ minimising the Kempf functional $K_\chi(\lambda) = M_\chi(v)$ a \textbf{maximally destabilising 1-PS} for $v$. The set of indivisible maximally destabilizing 1-PSs for $v$ is denoted by $\Lambda_\chi(v) \subseteq C_v$.    
\end{definition}

\begin{definition}{\label{def:associated-graded-affine-GIT}}
Let $v \in V$ and $\lambda \in C_v$. We call the point $\mathrm{gr}_\lambda(v) := \lim_{t \rightarrow 0} \lambda(t) \cdot v$ the \textbf{associated graded} point of $v$ with respect to $\lambda$.
\end{definition}

\begin{definition}{\label{def:blade-limit-set-sweep}}
Let $\lambda \in X_*(G)$ be an indivisible 1-PS. We define the \textbf{admissible locus} of $\lambda$ to be
$$V_+^\lambda := \left\{v \in V \;\middle|\; \lim_{t \rightarrow 0} \lambda(t)\cdot v \text{ exists}\right\}$$
and the \textbf{fixed point locus} of $\lambda$ to be
$$V^\lambda := \left\{v \in V \;\middle|\; \lambda(t)\cdot v = v, \, \forall t \in k^*\right\}$$
Let $p_\lambda : V_+^\lambda \rightarrow V^\lambda$ be the \textbf{retraction map} sending $v \mapsto \lim_{t \rightarrow 0} \lambda(t) \cdot v$. We define the \textbf{blade} of $\lambda$ as
\begin{equation}{\label{eq:S-lambda}}
    S_\lambda := \left\{v \in V^{\mathscr{L},\mathrm{u}} \;\middle|\;\lambda \in \Lambda_\chi(v)\right\},
\end{equation}
and the \textbf{limit set} of $\lambda$ to be the image
\begin{equation}{\label{eq:limit-set}}
    Z_\lambda := p_\lambda(S_\lambda).
\end{equation}
The \textbf{Hesselink stratum} associated to $\lambda$ is defined to be
\begin{equation}{\label{eq:hesselink-stata}}
    S_{[\lambda]} := G \cdot S_\lambda
\end{equation}
and the \textbf{Hesselink index set} is
\begin{equation}{\label{eq:hesselink-index-set}}
    \mathcal{B} := \left\{[\lambda] \,:\, \exists\, v \in V^{\mathscr{L},\mathrm{u}} \text{ s.t. }\lambda \in \Lambda_\chi(v)\right\}
\end{equation}
where $[\lambda]$ denotes the $G$-conjugacy class of $\lambda$.
\end{definition}

The loci $V_+^\lambda$ and $V^\lambda$ are closed subsets of $V$ and the blades $S_\lambda$ are locally closed subvarieties of $V$ and equal $S_\lambda = p_\lambda^{-1}(Z_\lambda)$ \cite[Lemma 2.19]{Hoskins2012Stratifications}. There is also a strict partial ordering $<$ on $\mathcal{B}$ where $[\lambda] < [\lambda']$ if $K_\chi(\lambda') < K_\chi(\lambda)$. When the action of $G$ on $V$ is linear, a 1-PS $\lambda \in X_*(G)$ splits $V$ into weight spaces $V = \bigoplus_{i \in \mathbb{Z}} V^i$ where $V^i = \{v \in V \;|\; \lambda(t) v = t^i v, \, \forall t \in k^*\}$. Then the admissible locus is $V_+^\lambda = \bigoplus_{i \geq 0} V^i$ and the fixed point locus is $V^\lambda = V^0$.

\begin{theorem}[Hesselink-Kirwan-Kempf-Ness Stratification {\cite[Theorem 2.16]{Hoskins2012Stratifications}}]
Let $G$ be a reductive group acting linearly on a vector space $V$, and let $\mathscr{L}$ be the $G$-linearisation corresponding to a character $\chi \in X^*(G)$. Fix maximal torus $T$ of $G$ and a $W_G(T)$-invariant symmetric positive-definite bilinear form $\langle \cdot, \cdot \rangle : X_*(T) \times X_*(T) \rightarrow \mathbb{Z}$. Then the Hesselink index set $\mathcal{B}$ is finite, the Hesselink strata $S_{[\lambda]}$ are all disjoint, and there exists a stratification
$$V = V^{\mathscr{L},\mathrm{ss}} \; \sqcup \; \bigsqcup_{[\lambda] \in \mathcal{B}} S_{[\lambda]}.$$
with $\partial S_{[\lambda]} \cap S_{[\lambda']} \neq \varnothing$ only if $[\lambda] < [\lambda']$. Each $S_{[\lambda]}$ is a smooth locally closed $G$-invariant subvariety of $V$.
\end{theorem}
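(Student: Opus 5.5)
The plan is to reduce everything to a fixed maximal torus $T$ and to the finitely many $T$-weights of $V$, and then use Kempf's uniqueness to glue the torus picture back to $G$. First, every unstable $v$ has, by the Affine Kempf Instability Theorem, an indivisible maximally destabilising 1-PS $\lambda$. After conjugating by some $g\in G$ we may assume $\lambda\in X_*(T)$. Then $\lambda$ also minimises the Kempf functional for the $T$-action on $g\cdot v$. Write $g\cdot v=\sum_\alpha v_\alpha$ as a sum of $T$-weight vectors. For $\mu\in X_*(T)$, admissibility for $g\cdot v$ only depends on the \emph{weight set} $\mathrm{wt}(g\cdot v)=\{\alpha : v_\alpha\neq 0\}$: we need $(\alpha,\mu)\geq 0$ for all such $\alpha$.

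Identify $X_*(T)_{\mathbb Q}$ with its dual via $\langle\cdot,\cdot\rangle$. Minimising $(\chi,\mu)/\|\mu\|$ over this polyhedral cone is a strictly convex problem: the optimal ray is the direction of the closest point to $-\chi$ of the dual cone. Since $V$ has only finitely many $T$-weights, there are only finitely many weight sets, hence finitely many optimal rays and finitely many indivisible optimal $\lambda\in X_*(T)$. This gives finiteness of $\mathcal B$. The fact that $(\chi,\lambda)<0$ exactly for unstable points is the Affine Hilbert--Mumford Criterion, so $V=V^{\mathrm{ss}}\sqcup\bigcup_{[\lambda]}S_{[\lambda]}$ as sets.

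Disjointness of the strata follows from uniqueness in Kempf's theorem. If $v\in S_{[\lambda]}\cap S_{[\lambda']}$, then $\Lambda_\chi(v)$ contains a conjugate of $\lambda$ and a conjugate of $\lambda'$. All indivisible maximally destabilising 1-PSs of $v$ are $P$-conjugate, so $[\lambda]=[\lambda']$.

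For the geometry, I would prove three things about a fixed $\lambda$, using the linear description $V_+^\lambda=\bigoplus_{i\geq0}V^i$ and $V^\lambda=V^0$.
\begin{itemize}
\item[(i)] $Z_\lambda$ is the semistable locus for $L(\lambda)$ acting on $V^0$ with a suitably shifted rational character $\chi'$. Concretely, $\chi'$ is $\chi$ corrected by the character dual to $\lambda$ (as in the shifted parameters $\widetilde{\boldsymbol\theta}$ promised in the introduction). In particular $Z_\lambda$ is open in $V^0$.
\item[(ii)] $S_\lambda=p_\lambda^{-1}(Z_\lambda)$, as quoted from \cite{Hoskins2012Stratifications}. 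Hence $S_\lambda$ is a trivial affine bundle over $Z_\lambda$ with fibre $\bigoplus_{i>0}V^i$. So $S_\lambda$ is smooth, $P(\lambda)$-invariant, and open in the closed subspace $V_+^\lambda$.
\item[(iii)] The map $G\times^{P(\lambda)}S_\lambda\to S_{[\lambda]}$ is bijective. If $g\cdot s\in S_\lambda$ with $s\in S_\lambda$, then both $\lambda$ and $g^{-1}\lambda g$ lie in $\Lambda_\chi(s)$. Kempf's uniqueness gives $g^{-1}\lambda g=p\lambda p^{-1}$ with $p\in P(\lambda)$, so $gp\in Z_G(\lambda)\subseteq P(\lambda)$ and hence $g\in P(\lambda)$.
\end{itemize}
I would then upgrade this bijection to an isomorphism onto a locally closed subvariety. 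Since $G/P(\lambda)$ is projective, the map $G\times^{P}\overline{S_\lambda}\to V$ is proper, so $G\cdot\overline{S_\lambda}$ is closed. A differential computation shows the map is an immersion along $S_\lambda$: the tangent directions from $\mathfrak g/\mathfrak p$ land in the negative weight spaces $\bigoplus_{i<0}V^i$, complementary to $V_+^\lambda$. Together these give smoothness and local closedness of $S_{[\lambda]}$.

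The closure relation is the step I expect to be the main obstacle. The plan is as follows. Take $v\in\partial S_{[\lambda]}$, and after conjugating assume $v\in\overline{S_\lambda}\setminus S_{[\lambda]}\subseteq V_+^\lambda$. Then $\lambda$ is admissible for $v$, so $M_\chi(v)\leq K_\chi(\lambda)$. It remains to exclude equality. If $M_\chi(v)=K_\chi(\lambda)$, then $\lambda$ would itself be maximally destabilising for $v$, forcing $v\in S_\lambda$, a contradiction. Hence $M_\chi(v)<K_\chi(\lambda)$. Writing $[\lambda']$ for the stratum containing $v$, this says $K_\chi(\lambda')<K_\chi(\lambda)$, i.e. $[\lambda]<[\lambda']$.

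The delicate point is the claim that equality forces $v\in S_\lambda$, with $\lambda$ rather than just some conjugate of it. I would check this via the torus reduction: the optimal ray for a weight set is unique, and any minimiser admissible for $v$ that lies in $X_*(T)$ can be compared with $\lambda$ inside a common torus of $P(\lambda)$. If this argument turns out to be subtle, I would fall back on upper semicontinuity of $v\mapsto M_\chi(v)$ on each $V_+^\lambda$, which follows from the finiteness of weight sets.
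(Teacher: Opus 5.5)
The paper gives no proof of this statement; it is quoted from Hoskins. Your outline is the standard Hesselink--Kirwan argument. Finiteness via $T$-weight sets, disjointness and the bijectivity in (iii) via Kempf uniqueness, and closedness of $G\cdot\overline{S_\lambda}$ via projectivity of $G/P(\lambda)$ are all fine.

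\textbf{The immersion step.} This is the genuine gap, and smoothness rests on it. Write $\mathfrak g=\bigoplus_j\mathfrak g_j$ for the $\lambda$-weight decomposition, so $\mathfrak p(\lambda)=\bigoplus_{j\geq0}\mathfrak g_j$, and write $s=s_0+s_+$ with $s_0=p_\lambda(s)\in Z_\lambda$.
\begin{enumerate}
\item It is not true that $\xi\cdot s\in\bigoplus_{i<0}V^i$ for $\xi\in\mathfrak g_{<0}$: the term $\xi\cdot s_+$ has weights $j+i$ that can be $\geq0$.
\item Even at the fixed point $s_0$, where $\xi\cdot s_0$ does have negative weight, complementarity to $V_+^\lambda$ only shows the two tangent contributions meet trivially. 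It does not show that $\mathfrak g/\mathfrak p(\lambda)$ injects.
\end{enumerate}
What you actually need is that $\xi\mapsto\xi\cdot s_0$ is injective on each $\mathfrak g_j$ with $j<0$. Equivalently, the infinitesimal stabiliser $\mathfrak g_{s_0}=\{\xi\in\mathfrak g:\xi\cdot s_0=0\}$ must lie in $\mathfrak p(\lambda)$. You give no argument for this.

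Here is how to close it.
\begin{enumerate}
\item The special case $g\cdot s_0=s_0$ of your (iii) gives $\mathrm{Stab}_G(s_0)\subseteq P(\lambda)$.
\item In characteristic $0$, $\mathfrak g_{s_0}=\mathrm{Lie}\,\mathrm{Stab}_G(s_0)\subseteq\mathfrak p(\lambda)$.
\item Now suppose $\xi\cdot s\in V_+^\lambda$ with $\xi\notin\mathfrak p(\lambda)$, and let $j_0<0$ be the lowest weight with $\xi_{j_0}\neq0$. The weight-$j_0$ component of $\xi\cdot s$ is exactly $\xi_{j_0}\cdot s_0$. It must vanish, which contradicts step 2.
\end{enumerate}
So $G\times^{P(\lambda)}S_\lambda\to V$ is unramified. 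A proper, injective, unramified morphism is a closed immersion, hence an isomorphism onto the reduced $S_{[\lambda]}$.

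The characteristic hypothesis is essential to this route, and the paper works over an arbitrary algebraically closed field. Take $\mathrm{GL}_2$ acting on $\mathrm{Sym}^p(k^2)$ in characteristic $p$, with basis $x,y$ of $k^2$, $\chi=\det$, $v=x^p$ and $\lambda(t)=\mathrm{diag}(1,t^{-1})$. Then $E_{21}\in\mathfrak g_{-1}$ (sending $x\mapsto y$) satisfies $E_{21}\cdot x^p=px^{p-1}y=0$. The map $G\times^{P(\lambda)}S_\lambda\to S_{[\lambda]}=\mathrm{span}(x^p,y^p)\setminus\{0\}$ is bijective but purely inseparable, not an isomorphism.

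\textbf{Local closedness.} Properness plus an immersion does not by itself make $S_{[\lambda]}$ locally closed, nor make $G\times^{P(\lambda)}S_\lambda\to S_{[\lambda]}$ proper. You also need $S_{[\lambda]}\cap\overline{S_\lambda}=S_\lambda$. Given this:
\begin{enumerate}
\item $S_{[\lambda]}=G\cdot\overline{S_\lambda}\setminus G\cdot(\overline{S_\lambda}\setminus S_\lambda)$ is open in a closed set. The removed set is closed by the same properness argument, since $\overline{S_\lambda}\setminus S_\lambda$ is closed and $P(\lambda)$-invariant.
\item $G\times^{P(\lambda)}S_\lambda$ is the full preimage of $S_{[\lambda]}$ in $G\times^{P(\lambda)}\overline{S_\lambda}$, so the restricted map is proper.
\end{enumerate}

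This needed fact is precisely the equality case of your closure paragraph, and that case is not delicate. Since $V_+^\lambda$ is closed, $\overline{S_\lambda}\subseteq V_+^\lambda$, so $\lambda$ itself lies in $C_v$ for every $v\in\overline{S_\lambda}$. If moreover $v\in S_{[\lambda]}$, then $M_\chi(v)=K_\chi(\lambda)<0$. By definition this says $v$ is unstable and $\lambda\in\Lambda_\chi(v)$, i.e.\ $v\in S_\lambda$. No comparison of conjugates in a common torus and no semicontinuity fallback is needed; with this made explicit, your closure argument is complete.
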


As before in Section~\ref{sec:semistability-of-hyperquivers}, we want to consider, more generally, rational (co)characters of $G$. We can extend the form $\langle \cdot, \cdot \rangle : X_*(T) \times X_*(T) \rightarrow \mathbb{Z}$ to a Weyl-invariant inner product $\langle \cdot, \cdot \rangle : X_*(T)_{\mathbb{Q}} \times X_*(T)_{\mathbb{Q}} \rightarrow \mathbb{Q}$ where $X_*(T)_{\mathbb{Q}} = X_*(T) \otimes \mathbb{Q}$, giving us a length function $\|\cdot\| : X_*(G)_{\mathbb{Q}} \rightarrow \mathbb{R}_{\geq 0}$. Together with the extended canonical map $(\cdot,\cdot) : X^*(G)_{\mathbb{Q}} \times X_*(G)_{\mathbb{Q}} \rightarrow \mathbb{Q}$, we can therefore minimise the Kempf functional \eqref{eq:kempf-functional} over rational 1-PSs.

Let $[\lambda] \in \mathcal{B}$. We can choose a representative $\lambda \in X_*(T)_{\mathbb{Q}}$ in $T$, and by Remark~\ref{rem:levi-torus-1-PS}, $\lambda \in X_*(L(\lambda))_{\mathbb{Q}}$ and it is also a 1-PS of $L(\lambda)$. Let $\lambda^* \in X^*(T)_{\mathbb{Q}}$ be the character dual to $\lambda$ under the inner product $\langle \cdot, \cdot \rangle$. Since $\lambda$ is central in $L(\lambda)$, it is invariant under the action of $W_{L(\lambda)}(T)$, and from this fact, it follows that $\lambda^*$ is $W_{L(\lambda)}(T)$-invariant. By \cite[Proposition 2.3.2]{herrero2026stabilitydisconnectedgroups}, there is a unique extension to a character $\lambda^* \in X^*(L(\lambda))_\mathbb{Q}$. We can therefore consider a new affine GIT problem where $L(\lambda)$ acts on $V^\lambda$ with a linearisation given by a new character. This gives a recursive refinement on the Hesselink strata, which has been studied in \cite{Kirwan2005} and \cite{núñez2023refinedhardernarasimhanfiltrationsmoduli}.

\begin{proposition}[{\cite[Proposition 2.18]{Hoskins2012Stratifications}}]{\label{prop:shifted-linearisation}}
Let $[\lambda] \in \mathcal{B}$ and choose a represenative $\lambda \in X_*(T)$. Let $L(\lambda)$ act linearly on the vector space $V^\lambda$ with a linearisation $\mathscr{L}_\lambda$ given by the rational character $\chi_\lambda := \|\lambda\|^2\chi - (\chi,\lambda)\lambda^*$ of $L(\lambda)$. Then $(V^\lambda)^{\mathscr{L}_\lambda,\mathrm{ss}} = Z_\lambda$. In particular, if $v \in V^{\mathscr{L},\mathrm{u}}$ and $\lambda \in \Lambda_\chi(v)$, then the associated graded point $\mathrm{gr}_\lambda(v) \in V^\lambda$ is semistable with respect to linearisation given by $\chi_\lambda$.
\end{proposition}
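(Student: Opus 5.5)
The plan is to reduce the statement to a first-order optimality condition for the Kempf functional \eqref{eq:kempf-functional} at $\lambda$, computed inside a maximal torus $T \subseteq L(\lambda)$ containing $\lambda$. I would prove it in three steps.

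\textbf{Step 1: reduction to the limit point.} I would first show
$$Z_\lambda = \{y \in V^\lambda \cap V^{\mathscr{L},\mathrm{u}} \;:\; \lambda \in \Lambda_\chi(y)\},$$
that is, for $v \in V^\lambda_+$ we have $\lambda \in \Lambda_\chi(v)$ if and only if $\lambda \in \Lambda_\chi(p_\lambda(v))$. This is the step I expect to be the main obstacle.
\begin{enumerate}
\item The direction from $y$ to $v$ should follow from $C_v \subseteq C_{p_\lambda(v)}$ up to $P(\lambda)$-conjugacy. The idea is that $y$ lies in the closure of the $\lambda(k^*)$-orbit of $v$, so admissibility passes to the limit: $M_\chi(y) \le M_\chi(v)$, while $K_\chi(\lambda)$ is the same for both points. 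I would prove this in a torus containing $\lambda$, using the weight decomposition $V = \bigoplus_i V^i$. The limit $y$ keeps exactly the weight-$0$ components of $v$, so every torus 1-PS commuting with $\lambda$ that is admissible for $v$ is admissible for $y$.
\item The converse direction is the delicate one. Here I would use the uniqueness in the Affine Kempf Instability Theorem: the optimal parabolic is canonical, hence stable under $\lambda(k^*)$, which lets me conjugate the optimal 1-PS by an element of $P(\lambda)$ so that it commutes with $\lambda$.
\end{enumerate}

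\textbf{Step 2: $\chi_\lambda$-unstable implies $\lambda$ is not optimal for $y$.} Fix $y \in V^\lambda$. Since $\lambda$ fixes $y$, any 1-PS $\mu$ of $L(\lambda)$ (taken in $T$) that is admissible for $y$ gives admissible rational 1-PSs $\lambda + \varepsilon\mu$ for all $\varepsilon \ge 0$. The reason is that weights add, and $\lambda$ has weight $0$ on every component of $y$. Differentiating, I get
$$\frac{d}{d\varepsilon}\Big|_{\varepsilon=0} \frac{(\chi,\lambda+\varepsilon\mu)}{\|\lambda+\varepsilon\mu\|} = \frac{\|\lambda\|^2(\chi,\mu) - (\chi,\lambda)\langle\lambda,\mu\rangle}{\|\lambda\|^3} = \frac{(\chi_\lambda,\mu)}{\|\lambda\|^3}.$$
Now suppose $y$ is $\chi_\lambda$-unstable. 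The Hilbert–Mumford Criterion, applied to $L(\lambda)$ acting on $V^\lambda$, gives such a $\mu$ with $(\chi_\lambda,\mu) < 0$. The derivative is then negative, so $\lambda$ does not minimise $K_\chi$ on $C_y$.

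\textbf{Step 3: $\lambda$ not optimal implies $\chi_\lambda$-unstable.} Suppose $\lambda \notin \Lambda_\chi(y)$.
\begin{enumerate}
\item By Kempf's theorem and the canonicity of the optimal parabolic (it contains the stabiliser of $y$, which contains $\lambda(k^*)$), I can pick an optimiser $\lambda'$ lying in a common maximal torus $T$ with $\lambda$. Then $\lambda'$ is a 1-PS of $L(\lambda)$, and $K_\chi(\lambda') < K_\chi(\lambda)$.
\item In $X_*(T)_\mathbb{Q}$, admissibility for $y$ is a convex cone condition: the weights on the nonzero components of $y$ must be $\ge 0$. So the whole segment $(1-\varepsilon)\lambda + \varepsilon\lambda'$ is admissible.
\item Minimising the linear form $(\chi,\cdot)$ over the unit ball intersected with this cone is a convex problem. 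Hence the strict decrease at $\lambda'$ forces the directional derivative at $\lambda$ in direction $\lambda' - \lambda$ to be negative.
\item By the formula in Step 2 this gives $(\chi_\lambda,\lambda'-\lambda) < 0$. Since $(\chi_\lambda,\lambda) = \|\lambda\|^2(\chi,\lambda) - (\chi,\lambda)\|\lambda\|^2 = 0$, this says $(\chi_\lambda,\lambda') < 0$.
\item As $\lambda'$ is admissible for $y$ in $L(\lambda)$, the point $y$ is $\chi_\lambda$-unstable.
\end{enumerate}

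\textbf{Conclusion.} Steps 2 and 3 show that for $y \in V^\lambda$, the point $y$ is $\chi_\lambda$-semistable exactly when $\lambda \in \Lambda_\chi(y)$. Combined with Step 1, this gives $(V^\lambda)^{\mathscr{L}_\lambda,\mathrm{ss}} = Z_\lambda$. The final assertion about $\mathrm{gr}_\lambda(v)$ is then immediate from the definition of $Z_\lambda$.
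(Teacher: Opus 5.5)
Your Steps 2 and 3 are correct. In Step 3 the convexity detour can even be replaced by Cauchy--Schwarz: write $a=K_\chi(\lambda)$ and $b=K_\chi(\lambda')$ with $b<a<0$. Then $\|\lambda\|^2(\chi,\lambda')=b\|\lambda\|^2\|\lambda'\|<a\|\lambda\|^2\|\lambda'\|\le a\|\lambda\|\langle\lambda,\lambda'\rangle=(\chi,\lambda)\langle\lambda,\lambda'\rangle$, which is exactly $(\chi_\lambda,\lambda')<0$.

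The weak point is Step 1(2), which you flag as the obstacle but never actually carry out. First, there is a slip. To keep an optimiser $\lambda_y$ for $y=p_\lambda(v)$ inside $C_y$, you must conjugate by an element of its own parabolic $P(\lambda_y)$, choosing a maximal torus of $P(\lambda_y)$ through $\lambda(k^*)$. Conjugating by $P(\lambda)$ instead moves $y$. Second, and more seriously, once $\lambda_y$ commutes with $\lambda$ you still have to show $K_\chi(\lambda_y)\ge K_\chi(\lambda)$. The hypothesis $\lambda\in\Lambda_\chi(v)$ can only enter through a 1-PS admissible for $v$, and you never produce one. The missing observation is this: for every $\mu\in X_*(L(\lambda))$ admissible for $y$, the rational 1-PS $\lambda+\varepsilon\mu$ is admissible for $v$ for all sufficiently small $\varepsilon>0$. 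The reason is that, in a torus containing both $\lambda$ and $\mu$, the weights in the support of $v$ with positive $\lambda$-pairing stay positive under small perturbation. The weights with zero $\lambda$-pairing are exactly the support of $y$, and there $\mu$ pairs non-negatively.

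With that observation Step 1 disappears: run your Step 2 computation at $v$ rather than at $y$. Suppose $y=p_\lambda(v)$ with $\lambda\in\Lambda_\chi(v)$ were $\chi_\lambda$-unstable via some $\mu$. Then your derivative formula gives $K_\chi(\lambda+\varepsilon\mu)<K_\chi(\lambda)=M_\chi(v)$ for small $\varepsilon$, a contradiction. This proves $Z_\lambda\subseteq(V^\lambda)^{\mathscr{L}_\lambda,\mathrm{ss}}$ with no appeal to Kempf's uniqueness. Conversely, a $\chi_\lambda$-semistable $y\in V^\lambda$ is $\chi$-unstable because $(\chi,\lambda)<0$. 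Step 3 then gives $\lambda\in\Lambda_\chi(y)$, hence $y\in S_\lambda$ and $y=p_\lambda(y)\in Z_\lambda$. So Step 1(1) is never needed. It is really the inclusion $p_\lambda^{-1}(Z_\lambda)\subseteq S_\lambda$, which the paper quotes separately from Hoskins.

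The paper itself only cites this proposition. However, the part it actually uses, semistability of the associated graded, is reproved in part (b) of Theorem~\ref{thm:hn-filtration} by precisely this first-order argument at $\boldsymbol\phi$, in dual (KKT) form. There, the multipliers $\nu_b$ for $\lambda$ as minimiser over $C_{\boldsymbol\phi}$ are supported on the constraints active at $\widehat{\boldsymbol\ell}$, i.e.\ on the components surviving in $\mathrm{gr}_{\mathrm{HN}}(\boldsymbol\phi)$. Hence pairing with any admissible $\lambda'$ of $L(\lambda)$ gives a non-negative value. Your Step 3, which needs Kempf's canonical parabolic, supplies the reverse inclusion, which the paper never uses.
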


\subsection{The Harder-Narasimhan filtration}{\label{sec:harder-narasimhan}}

We translate instability in affine GIT for our hyperquiver setting of $G = \prod_{v \in \mathcal{V}} \mathrm{GL}(W_v)$ acting on $R(\mathcal{H},\mathbf{d})$ with respect to a stability parameter $\boldsymbol{\theta} \in \mathbb{Q}^{|\mathcal{V}|}$.

\begin{definition}{\label{def:induced-lambda}}
A $\mathbb{Q}$-filtration $(F^\bullet W,\boldsymbol{\ell})$ is said to be \textit{induced} by a rational 1-PS $\lambda \in X_*(G)_{\mathbb{Q}}$ if $\lambda$ induces a weight space decomposition $W_v = \bigoplus_{k=1}^m W_v^{(\ell_k)}$ with weights $\ell_1 > \cdots > \ell_m$ and $F^kW_v = W_v^{(\ell_k)} \oplus F^{k-1}W_v$.
\end{definition}

\begin{definition}
\label{def:refined-hyperquiver}
Let $\boldsymbol{\phi} \in R(\mathcal{H},\mathbf{d})$ be a representation and let $(F^\bullet W,\boldsymbol{\ell})$ be a $\mathbb{Q}$-filtration admissible for $\boldsymbol{\phi}$, induced by a rational 1-PS $\lambda \in X_*(G)_{\mathbb{Q}}$. The \textbf{refined hyperquiver} $\mathcal{H}_{F}$ is the hyperquiver with
\begin{enumerate}
    \item vertex set $\mathcal{V} \times [m]$
    \item an edge $c = (e,I,J)$ for every $e \in \mathcal{E}$, $I : s(e) \rightarrow [m]$, and $J : t(e) \rightarrow [m]$, having source vertices $((s,I(s)))_{s \in s(e)}$ and target vertices $((t,J(t)))_{t \in t(e)}$
\end{enumerate}
The \textbf{associated graded representation} $\mathrm{gr}_{F}(\boldsymbol{\phi})$ is the representation of the hyperquiver $\mathcal{H}_{F}$ defined as follows.
\begin{enumerate}
    \item it has vector spaces $\mathrm{gr}_kW_v := F^kW_v/F^{k-1}W_v$ on each vertex $(v,k) \in \mathcal{V} \times [m]$
    \item it has component maps
    $$\mathrm{gr}_F(\boldsymbol{\phi})_{(e,I,J)} : \bigotimes_{s \in s(e)} \mathrm{gr}_{I(s)}W_s \rightarrow \bigotimes_{t \in t(e)} \mathrm{gr}_{J(t)}W_t$$
    on each edge $c = (e,I,J)$ for all $e \in \mathcal{E}$, $I : s(e) \rightarrow \{1,\ldots,m\}$, $J : t(e) \rightarrow \{1,\ldots,m\}$, which are identical to those of $\boldsymbol{\phi}$, except when
    $$\sum_{t \in t(e)} \ell_{J(t)} > \sum_{s \in s(e)} \ell_{I(s)}$$
    where the component map is the zero map.
\end{enumerate}
We denote the set of non-zero components $c = (e,I,J)$ of $\mathrm{gr}_F(\boldsymbol{\phi})$ by $\mathcal{C}(\mathrm{gr}_F(\boldsymbol{\phi}))$.
\end{definition}

The associated graded representation $\mathrm{gr}_F(\boldsymbol\phi)$ corresponds to the associated graded point $\mathrm{gr}_\lambda(\boldsymbol{\phi})$. Its component maps $\mathrm{gr}_F(\boldsymbol\phi)_{(e,I,J)}$ come from the weight space decomposition induced by $\lambda$. The shifted stability parameter $\widetilde{\boldsymbol{\theta}}$ is precisely the stability parameter corresponding to the linearisation $\mathscr{L}^\lambda$ given by the character $\chi_\lambda$ in Proposition~\ref{prop:shifted-linearisation}. We now show that, in fact, the associated graded representation is canonical.

\begin{lemma}{\label{lem:associated-graded-independent}}
Let $(F^\bullet W,\boldsymbol{\ell})$ be a $\mathbb{Q}$-filtration.
\begin{enumerate}
    \item Two rational 1-PSs $\lambda,\lambda' \in X_*(G)_{\mathbb{Q}}$ induce the same $\mathbb{Q}$-filtration $(F^\bullet W,\boldsymbol{\ell})$ if and only if $\lambda' = \boldsymbol{p}\lambda \boldsymbol{p}^{-1}$ for some $\boldsymbol{p} \in P(\lambda)$.
    \item The associated graded representation $\mathrm{gr}_F(\boldsymbol\phi)$ is independent of the choice of $\lambda \in X_*(G)_{\mathbb{Q}}$ inducing $(F^\bullet W,\boldsymbol{\ell})$.
\end{enumerate}
\end{lemma}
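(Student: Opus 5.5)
For part (1), I will pass between rational 1-PSs and graded decompositions. After clearing denominators, a rational 1-PS $\lambda$ amounts to a choice, for each $v$, of a splitting $W_v = \bigoplus_k W_v^{(\ell_k)}$ into eigenspaces, with $\lambda(t)$ acting on $W_v^{(\ell_k)}$ by $t^{\ell_k}$. Such a $\lambda$ induces $(F^\bullet W,\boldsymbol{\ell})$ exactly when these eigenspaces are complements $W_v^{(\ell_k)}$ of $F^{k-1}W_v$ in $F^kW_v$.

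For the reverse direction of (1), take $\boldsymbol p\in P(\lambda)=P(F^\bullet W)$. Then $\boldsymbol p\lambda\boldsymbol p^{-1}$ has eigenspaces $p_v(W_v^{(\ell_k)})$ with the same weights. Since $p_v$ preserves each $F^kW_v$ and is invertible, $F^{k-1}W_v\oplus p_v(W_v^{(\ell_k)})=F^kW_v$, so the induced $\mathbb{Q}$-filtration is unchanged.

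For the forward direction of (1), suppose $\lambda,\lambda'$ induce the same $(F^\bullet W,\boldsymbol\ell)$, with splittings $W_v^{(\ell_k)}$ and $W_v'^{(\ell_k)}$ that are both complements of $F^{k-1}W_v$ in $F^kW_v$. I will define $p_v$ on $W_v^{(\ell_k)}$ as the composite
\[ W_v^{(\ell_k)}\to F^kW_v/F^{k-1}W_v \xrightarrow{\ \sim\ } W_v'^{(\ell_k)}. \]
In adapted bases this $p_v$ is block unitriangular, so $\boldsymbol p\in U(\lambda)\subseteq P(\lambda)$. It maps eigenspaces to eigenspaces with matching weights, hence $\boldsymbol p\lambda\boldsymbol p^{-1}=\lambda'$. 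The rational scaling is harmless: I pick a common $N$ so that $N\lambda$ and $N\lambda'$ are honest 1-PSs, and the equivalence relation on $X_*(G)_\mathbb{Q}$ is compatible with conjugation.

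For part (2), the spaces $\mathrm{gr}_kW_v=F^kW_v/F^{k-1}W_v$ and the refined hyperquiver $\mathcal H_F$ depend only on $(F^\bullet W,\boldsymbol\ell)$. What remains is to show that the component maps are intrinsic. Fix $e,I,J$. Using the splitting given by $\lambda$, the component $\phi_e^{(I,J)}$ is $\pi_J\circ\phi_e\circ\iota_I$, where $\iota_I$ includes $\bigotimes_s W_s^{(\ell_{I(s)})}$ and $\pi_J$ projects onto $\bigotimes_t W_t^{(\ell_{J(t)})}$. We identify these with $\bigotimes \mathrm{gr}$ via the quotient maps.

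I will show that when $\sum_t\ell_{J(t)}\le\sum_s\ell_{I(s)}$ this component equals an intrinsic map, and otherwise it is set to zero by definition. By the admissibility \eqref{eq:admissible-filtration-property}, $\phi_e(\bigotimes_s U_{s,I(s)})$ lies in the sum of $\bigotimes_t U_{t,J'(t)}$ over those $J'$ with weight sum at least $\sum_s\ell_{I(s)}$. I then take the quotient of this target by the analogous sum over $J'$ with strictly larger weight sum, and project to the graded piece indexed by $J$. This gives a map defined purely from $\phi_e$ and the filtration, with domain $\bigotimes_s U_{s,I(s)}$ modulo the lower terms.

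The step I expect to be the main obstacle is checking two things: that this map kills the lower terms, and that it agrees with the splitting-defined component. Both should follow from the same admissibility containment applied to smaller $I'\le I$ (componentwise), together with the fact that lower-order source pieces land in strictly higher or equal target filtration levels. A cleaner alternative, which I would try first, is to use part (1): $\lambda'=\boldsymbol p\lambda\boldsymbol p^{-1}$ with $\boldsymbol p\in U(\lambda)$. The limit points then satisfy $\mathrm{gr}_{\lambda'}(\boldsymbol\phi)=\boldsymbol p\cdot\mathrm{gr}_\lambda(\boldsymbol p^{-1}\boldsymbol\phi)$, and $\lim_{t\to0}\lambda(t)\boldsymbol p^{-1}\lambda(t)^{-1}=\pi_\lambda(\boldsymbol p^{-1})=1$. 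This gives $\mathrm{gr}_\lambda(\boldsymbol p^{-1}\boldsymbol\phi)=\mathrm{gr}_\lambda(\boldsymbol\phi)$, and $\boldsymbol p$ acts as the identity on the graded pieces once each splitting is identified with $\mathrm{gr}_kW_v$. This yields independence directly.
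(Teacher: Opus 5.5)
Your proposal is correct and matches the paper's proof. For (1), the paper builds the same block-unitriangular $\boldsymbol{u}\in U(\lambda)$ by projecting $W_v^{(\ell_k)}\subseteq F^kW_v$ onto $W_v'^{(\ell_k)}$ along $F^{k-1}W_v$ and checks $\boldsymbol{u}\lambda\boldsymbol{u}^{-1}=\lambda'$; for (2) it uses exactly your ``cleaner alternative'', $\lim_{t\to 0}\lambda'(t)\cdot\boldsymbol\phi=\boldsymbol{u}\cdot\lim_{t\to 0}\lambda(t)\cdot\boldsymbol\phi$ (since $\lambda(t)\boldsymbol{u}^{-1}\lambda(t)^{-1}\to 1$) together with $\boldsymbol{u}$ inducing the identity on each $F^kW_v/F^{k-1}W_v$, so the intrinsic quotient construction you sketched is not needed, though it also works.
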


\begin{proof}
First we prove (1). If $\lambda' = \boldsymbol{p}\lambda \boldsymbol{p}^{-1}$ and $\lambda$ induces $(F^\bullet W,\boldsymbol{\ell})$, then $\lambda'$ induces $(\boldsymbol{p}F^\bullet W,\boldsymbol{\ell})$. But $\boldsymbol{p} \in P(\lambda)$ preserves $F^\bullet W$, so $\lambda'$ induces the same filtration. Conversely, if $\lambda$ and $\lambda'$ induce $(F^\bullet W,\boldsymbol{\ell})$, then we have two complements $F^kW_v = W_v^{(\ell_k)} \oplus F^{k-1}W_v = W_v'^{(\ell_k)} \oplus F^{k-1}W_v$ for all $v \in V$ and $k \in [m]$. Let $u_{v,k} : F^kW_v \rightarrow W_v'^{(\ell_k)}$ be the projection maps and extend them linearly to a linear map $\boldsymbol{u} = (u_v)_{v \in V}$. The map $\boldsymbol{u}$ is invertible as it is block upper unitriangular with blocks $(u_v)_{ij} : W_v^{(\ell_j)} \rightarrow W_v^{(\ell_i)}$, for $1 \leq i \leq j \leq m$. Conjugating $\boldsymbol{u}$ by $\lambda(t)$ scales every block by $t^{\ell_i - \ell_j}$, so if $i < j$, then $\ell_j < \ell_i$ so this block is sent to 0 as $t \rightarrow 0$. Thus, $\lambda(t) \boldsymbol{u} \lambda(t)^{-1} \xrightarrow[]{t \rightarrow 0} \mathrm{id}$ and hence $\boldsymbol{u} \in \mathrm{ker}(\pi_{\lambda}) = U(\lambda)$. Furthermore, for any $w' \in W_v'^{(\ell_k)}$ and $w' = \boldsymbol{u}\cdot w$,
$$\boldsymbol{u}\lambda(t)\boldsymbol{u}^{-1}\cdot w' = \boldsymbol{u}\lambda(t) \cdot w = \boldsymbol{u} \cdot t^{\ell_k}w = t^{\ell_k} \boldsymbol{u} \cdot w = t^{\ell_k} w' = \lambda'(t)\cdot w'.$$
Thus, we can take $\boldsymbol{p} = \boldsymbol{u}$.

Now we prove (2), which has the same hypotheses as (1). Suppose $\lambda$ and $\lambda'$ induce $(F^\bullet W,\boldsymbol{\ell})$. Using the same $\boldsymbol{u}$ as before, we have 
$$\lim_{t \rightarrow 0} \lambda'(t)\cdot \boldsymbol{\phi} = \lim_{t \rightarrow 0} (\boldsymbol{u}\lambda(t)\boldsymbol{u}^{-1})\cdot \boldsymbol{\phi} = \boldsymbol{u}\cdot\lim_{t \rightarrow 0} (\lambda(t)\boldsymbol{u}^{-1}\lambda(t)^{-1})\lambda(t)\cdot \boldsymbol{\phi} = \boldsymbol{u}\cdot\lim_{t \rightarrow 0} \lambda(t)\cdot \boldsymbol{\phi}.$$
Since $\boldsymbol{u}$ induces an identity on $F^{k}W_v/F^{k-1}W_v$, both $\lambda$ and $\lambda'$ give the same map between these quotients.
\end{proof}

\begin{definition}{\label{def:Q-filtration-hn-types}}
Let $(F^\bullet W,\boldsymbol{\ell})$ be a $\mathbb{Q}$-filtration admissible for a representation $\boldsymbol{\phi} \in R(\mathcal{H},\mathbf{d})$.
\begin{enumerate}
	\item For all $v \in \mathcal{V}$ and $k \in [m]$, we define $d_{v,k} = \dim(\mathrm{gr}_kW_v)$
    \item $\boldsymbol{\Theta}$ is the vector $\boldsymbol{\Theta} \in \mathbb{Q}^m$ with entries $\Theta_k  = \sum_{v \in V}\theta_v d_{v,k}$
    \item $D$ is the diagonal matrix $\mathrm{diag}(D_k)_{k=1}^m$, where $D_k = \sum_{v \in V} d_{v,k}$
    \item $A$ is the $|\mathcal{C}(\mathrm{gr}_F(\boldsymbol{\phi}))|\times m$ integer matrix whose entries are 
$$A_{c,k} := |\{ t \in t(e) : J(t) = k \} | - | \{ s \in s(e) : I(s) = k \}|$$
    for $c = (e,I,J) \in \mathcal{C}(\mathrm{gr}(\boldsymbol{\phi}))$ and $k \in [m]$
    \item We define $\|\boldsymbol{\cdot}\|^2_D : \mathbb{R}^m \rightarrow \mathbb{R}_{\geq 0}$ by $\|\boldsymbol{p}\|_D^2 = \sum_{v \in \mathcal{V}} \sum_{k=1}^m p_k^2 d_{v,k}$
    \item We define the \textbf{shifted stability parameter} $\widetilde{\boldsymbol{\theta}} \in \mathbb{Q}^{|\mathcal{V}| \times m}$ for $\mathrm{gr}_F(\boldsymbol\phi)$ to be
\begin{equation}{\label{eq:shifted-stability-parameters}}
    \widetilde{\theta}_{v,k} := \theta_v - \frac{\langle \boldsymbol{\Theta},\boldsymbol{\ell}\rangle}{\|\boldsymbol{\ell}\|_D^2}\ell_k, \quad v \in \mathcal{V}, \; k \in [m]
\end{equation}
\end{enumerate}    
\end{definition}

The shifted stability parameter $\widetilde{\boldsymbol{\theta}}$ corresponds to the rational character $\chi_\lambda$ mentioned in Proposition~\ref{prop:shifted-linearisation}, but scaled by $1/\|\lambda\|^2$.

In order to explicitly find a minimiser of the Kempf functional \eqref{eq:kempf-functional}, we will need to choose a Weyl-invariant inner product and find necessary and sufficient conditions to characterise such a minimizer. We fix a maximal torus $T$ of $G = \prod_{v \in \mathcal{V}} \mathrm{GL}(W_v)$ and choose the standard (Euclidean) inner product $\langle \cdot, \cdot \rangle$ on $X_*(T)_{\mathbb{Q}}$ with respect to a diagonalising basis for $T$. In this case, the Weyl group is the symmetric group $W_G(T) = \prod_{v \in \mathcal{V}} S_{d_{v}}$ permuting the coordinates of $\lambda_v \in X_*(T_v)$ within each $v \in \mathcal{V}$, and the inner product $\langle \cdot, \cdot \rangle$ is clearly invariant under this action. We will use the following characteristion of minimisers from optimisation theory.

\begin{theorem}[KKT Conditions for a Quadratic Program, \label{thm:kkt}{\cite[Section 16.4]{luenberger2008linear}}]
Consider an optimization problem of the form
\begin{equation}
    \begin{split}
        \mathrm{minimize} \quad & \frac{1}{2}\mathbf{x}^\top G \mathbf{x} + \mathbf{c}^\top \mathbf{x} \\
        \mathrm{subject}\;\mathrm{to} \quad & \mathbf{a}_i^\top\mathbf{x} = \alpha_i, \quad \forall i \in [m] \\
        & \mathbf{b}_j^\top\mathbf{x} \ge \beta_j, \quad \forall j \in [n]
    \end{split}
    \label{eq:kkt-quadratic-program}
\end{equation}
where $\mathbf{x},\mathbf{c},\mathbf{a}_i, \mathbf{b}_j \in \mathbb{R}^N$, $\alpha_i, \beta_j \in \mathbb{R}$, and $G \in \mathbb{R}^{N \times N}$ is positive definite. A point $\mathbf{x}^* \in \mathbb{R}^N$ is the unique global minimiser for \eqref{eq:kkt-quadratic-program} if and only if there exist $\mu_i, \nu_j \in \mathbb{R}$ such that:
\begin{enumerate}
    \item $G\mathbf{x}^* + \mathbf{c} + \sum_{i=1}^m \mu_i \mathbf{a}_i - \sum_{j=1}^n \nu_j \mathbf{b}_j = \mathbf{0}$
    \item $\mathbf{a}_i^\top\mathbf{x}^* = \alpha_i$ for $i \in [m]$ and $\mathbf{b}_j^\top\mathbf{x}^* \ge \beta_j$ for $j \in [n]$
    \item $\nu_j \ge 0$ for $j \in [n]$
    \item $\nu_j \cdot (\mathbf{b}_j^\top \mathbf{x}^* - \beta_j) = 0$ for $j \in [n]$
\end{enumerate}
\end{theorem}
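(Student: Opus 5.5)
The plan is to prove the two directions separately. Sufficiency will come from an exact second-order expansion of the objective, and necessity from Farkas' Lemma applied at the minimiser. We write $f(\mathbf{x}) = \frac{1}{2}\mathbf{x}^\top G\mathbf{x} + \mathbf{c}^\top\mathbf{x}$ and $\mathbf{g} := G\mathbf{x}^* + \mathbf{c} = \nabla f(\mathbf{x}^*)$. Because $f$ is quadratic, for any $\mathbf{x}$ we have the exact identity
$$f(\mathbf{x}) - f(\mathbf{x}^*) = \mathbf{g}^\top(\mathbf{x}-\mathbf{x}^*) + \tfrac{1}{2}(\mathbf{x}-\mathbf{x}^*)^\top G(\mathbf{x}-\mathbf{x}^*).$$
Both directions rest on this identity.

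\emph{Sufficiency.} Suppose $\mathbf{x}^*$ and multipliers $\mu_i,\nu_j$ satisfy conditions (1)--(4), and let $\mathbf{x}$ be any feasible point. By condition (1), $\mathbf{g} = -\sum_i \mu_i\mathbf{a}_i + \sum_j \nu_j\mathbf{b}_j$. The equality constraints at $\mathbf{x}$ and $\mathbf{x}^*$ give $\mathbf{a}_i^\top(\mathbf{x}-\mathbf{x}^*) = 0$, so the $\mu$-terms vanish. By complementary slackness (4), $\nu_j\mathbf{b}_j^\top(\mathbf{x}-\mathbf{x}^*) = \nu_j(\mathbf{b}_j^\top\mathbf{x} - \beta_j)$, which is $\ge 0$ by (2) and (3). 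Hence
$$f(\mathbf{x}) - f(\mathbf{x}^*) \ge \tfrac{1}{2}(\mathbf{x}-\mathbf{x}^*)^\top G(\mathbf{x}-\mathbf{x}^*),$$
and the right-hand side is strictly positive when $\mathbf{x}\neq\mathbf{x}^*$, since $G$ is positive definite. So $\mathbf{x}^*$ is the unique global minimiser.

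\emph{Necessity.} Let $\mathbf{x}^*$ be the global minimiser and let $\mathcal{A} = \{j : \mathbf{b}_j^\top\mathbf{x}^* = \beta_j\}$ be the set of active inequality constraints. Take any direction $\mathbf{d}$ with $\mathbf{a}_i^\top\mathbf{d} = 0$ for all $i$ and $\mathbf{b}_j^\top\mathbf{d}\ge 0$ for $j\in\mathcal{A}$. The inactive constraints hold strictly at $\mathbf{x}^*$, so they remain satisfied along $\mathbf{x}^*+\varepsilon\mathbf{d}$ for small $\varepsilon>0$. Thus $\mathbf{x}^*+\varepsilon\mathbf{d}$ is feasible, and the expansion gives $0 \le \varepsilon\,\mathbf{g}^\top\mathbf{d} + O(\varepsilon^2)$, hence $\mathbf{g}^\top\mathbf{d}\ge 0$.

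This is the step needing care. The constraints are linear, so no constraint qualification is needed: the cone of feasible directions coincides with the linearised cone just described. We then apply Farkas' Lemma, the same one used in Section~\ref{sec:destabilising-stability-params}, writing each equality as the pair of inequalities $\pm\mathbf{a}_i^\top\mathbf{d}\ge 0$. Farkas says that a vector nonnegative on the cone $\{\mathbf{d} : B^\top\mathbf{d}\ge 0\}$ is a nonnegative combination of the columns of $B$. This produces $\mu_i^{\pm}\ge 0$ and $\nu_j\ge 0$ for $j\in\mathcal{A}$ with
$$\mathbf{g} = \sum_i(\mu_i^+ - \mu_i^-)\mathbf{a}_i + \sum_{j\in\mathcal{A}}\nu_j\mathbf{b}_j.$$

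We then set $\mu_i := -(\mu_i^+-\mu_i^-)$ and $\nu_j := 0$ for $j\notin\mathcal{A}$. With these choices:
\begin{enumerate}
    \item condition (1) is exactly the displayed identity;
    \item condition (2) is feasibility of $\mathbf{x}^*$;
    \item condition (3) holds by construction;
    \item condition (4) holds because $\nu_j \neq 0$ only when $j\in\mathcal{A}$, where the constraint is tight.
\end{enumerate}
This completes the necessity direction, and together with sufficiency proves the characterisation.
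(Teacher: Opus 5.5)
Your proof is correct. The paper gives no argument for this statement. It cites it from the optimisation literature and only ever uses the necessity direction, to produce $\mu$ and $\nu_b \ge 0$ at the minimiser $\widehat{\boldsymbol{\ell}}$ in the proof of Theorem~\ref{thm:hn-filtration}.

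What you have written is the standard self-contained proof:
\begin{enumerate}
    \item sufficiency from the exact second-order expansion of a quadratic;
    \item necessity from the feasible-direction argument plus the same form of Farkas' Lemma that the paper already invokes in Section~\ref{sec:destabilising-stability-params}.
\end{enumerate}
The general KKT theorem for nonlinear programs is usually stated under a constraint qualification, such as linear independence of the active constraint gradients. Your route makes explicit that no such hypothesis is needed when the constraints are linear. This is exactly what the paper's application requires. There the active vectors $\mathbf{a}_b$, together with $\widehat{\boldsymbol{\theta}}$, are in general linearly dependent: distinct $b$ can give the same $\mathbf{a}_b$, and there can be more constraints than coordinates.

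Your necessity argument also shows that uniqueness of $\mathbf{x}^*$ plays no role in the existence of multipliers. Uniqueness enters only in the sufficiency step, where positive definiteness gives the strict inequality.

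One small point to state explicitly: the identity $f(\mathbf{x})-f(\mathbf{x}^*) = \mathbf{g}^\top(\mathbf{x}-\mathbf{x}^*) + \tfrac{1}{2}(\mathbf{x}-\mathbf{x}^*)^\top G(\mathbf{x}-\mathbf{x}^*)$ with $\mathbf{g} = G\mathbf{x}^*+\mathbf{c}$ uses $G = G^\top$. This is implicit in the statement, since otherwise condition (1) would have to involve $\tfrac{1}{2}(G+G^\top)$. It is automatic for $G = I$, which is the case used in the paper.
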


Thus, in our setting, $G = I$ and $\mathbf{c} = 0$ corresponds to the Euclidean inner product. We are now ready to state and prove our main theorem.

\begin{theorem}\label{thm:hn-filtration}
Let $\mathcal{H} = (\mathcal{V},\mathcal{E})$ be a hyperquiver, $\boldsymbol{\theta} \in \mathbb{Q}^{|\mathcal{V}|}$ be a stability parameter, and $\boldsymbol{\phi} \in R(\mathcal{H},\mathbf{d})$ be a $\boldsymbol{\theta}$-unstable representation. Fix a maximal torus $T$ of $G$ and equip $X_*(T)_{\mathbb{Q}}$ with the standard inner product. Then there
exists a unique $\mathbb{Q}$-filtration
$\mathrm{HN}(\boldsymbol{\phi}) = (F^\bullet W,\boldsymbol{\ell})$ admissible for $\boldsymbol{\phi}$, the \emph{Harder-Narasimhan filtration}, such that
\begin{enumerate}
  \item[(a)] $\langle \boldsymbol\Theta,\boldsymbol{\ell}\rangle = -1$
  \item[(b)] the associated graded representation $\mathrm{gr}_{\mathrm{HN}}(\boldsymbol{\phi})$ of the refined hyperquiver $\mathcal{H}_{F}$ is $\widetilde{\boldsymbol{\theta}}$-semistable for the
  shifted stability parameters $\tilde{\boldsymbol{\theta}}$ given in Definition~\ref{def:Q-filtration-hn-types}
\end{enumerate}
Furthermore, the weights $\ell_k$ are determined by the formula 
\begin{equation}{\label{eq:HN-weights-formula-thm}}
\boldsymbol{\ell} = \boldsymbol{p}/\|\boldsymbol{p}\|_D^2 \quad \text{ for } \quad \boldsymbol{p} = -D^{-1} \left( I - A^\top L^\dagger A D^{-1}  \right)\boldsymbol{\Theta}
\end{equation}
where $\boldsymbol\Theta$, $D$, $A$, $\|\cdot\|^2_D$ are given in Definition~\ref{def:Q-filtration-hn-types},  $L := AD^{-1}A^\top$, and $L^\dagger$ is the Moore-Penrose inverse of $L$. They all only depend on the hyperquiver representation $\mathrm{gr}_\mathrm{HN}(\boldsymbol\phi)$.
\end{theorem}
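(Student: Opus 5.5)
Existence and uniqueness come from the affine Kempf Instability Theorem, and the weight formula from the KKT Conditions (Theorem~\ref{thm:kkt}). Since $\boldsymbol\phi$ is $\boldsymbol\theta$-unstable, Kempf gives a maximally destabilising rational 1-PS $\lambda \in C_{\boldsymbol\phi}$, unique up to positive rational rescaling and conjugation by $P(\lambda)$. By Lemma~\ref{lem:associated-graded-independent}(1), the $P(\lambda)$-conjugacy class of $\lambda$ corresponds to a unique $\mathbb{Q}$-filtration $(F^\bullet W,\boldsymbol\ell)$, which is admissible because $\lambda$ is. Rescaling fixes a representative, and I normalise so that $(\chi_{\boldsymbol\theta},\lambda)=\langle\boldsymbol\Theta,\boldsymbol\ell\rangle=-1$; this is possible since the pairing is negative. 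This yields existence and uniqueness of a filtration satisfying (a) that minimises the Kempf functional.

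For (b), I apply Proposition~\ref{prop:shifted-linearisation}: $\mathrm{gr}_\lambda(\boldsymbol\phi)$ is semistable for $\chi_\lambda=\|\lambda\|^2\chi_{\boldsymbol\theta}-(\chi_{\boldsymbol\theta},\lambda)\lambda^*$ acting via $L(\lambda)=\prod_{v,k}\mathrm{GL}(\mathrm{gr}_kW_v)$ on $V^\lambda$. Two identifications are needed. First, $V^\lambda$ with this $L(\lambda)$-action is $R(\mathcal{H}_F,\mathbf{d}_{\boldsymbol\phi})$ restricted to the weight-zero edges. Second, with the standard inner product, $\lambda^*$ restricted to $\mathrm{GL}(\mathrm{gr}_kW_v)$ is $\det^{\ell_k}$ and $\|\lambda\|^2=\|\boldsymbol\ell\|_D^2$. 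Dividing by $\|\lambda\|^2$, which by Remark~\ref{rem:pairing-scaling-semistability} does not change semistability, gives exactly $\widetilde{\boldsymbol\theta}$. Edges of $\mathcal{H}_F$ of positive weight carry the zero map in $\mathrm{gr}_F(\boldsymbol\phi)$, so they impose no admissibility constraint and do not affect semistability; the same holds for negative-weight edges, which are zero because the filtration is admissible. By Lemma~\ref{lem:associated-graded-independent}(2), the result does not depend on the choice of $\lambda$.

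For the formula, I fix the filtration type and regard the weights as variables $\mathbf{x}\in\mathbb{R}^m$. Minimising the Kempf functional is then equivalent to minimising $\tfrac12\|\mathbf{x}\|_D^2$ subject to $\langle\boldsymbol\Theta,\mathbf{x}\rangle=-1$ and the admissibility constraints $(A\mathbf{x})_c\ge 0$ for $c\in\mathcal{C}(\mathrm{gr}_F(\boldsymbol\phi))$. This is a quadratic program of the form in Theorem~\ref{thm:kkt} after substituting $\mathbf{y}=D^{1/2}\mathbf{x}$. At the minimiser every nonzero component $c$ has $(A\boldsymbol\ell)_c=0$, since $\mathrm{gr}$ retains only the weight-zero components; hence all inequality constraints are active. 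Stationarity then gives $D\boldsymbol\ell=-\mu\boldsymbol\Theta+A^\top\boldsymbol\nu$, so $\boldsymbol\ell=D^{-1}(-\mu\boldsymbol\Theta+A^\top\boldsymbol\nu)$. Imposing $A\boldsymbol\ell=0$ gives $L\boldsymbol\nu=\mu AD^{-1}\boldsymbol\Theta$, and one solution is $\boldsymbol\nu=\mu L^\dagger AD^{-1}\boldsymbol\Theta$. Two facts make this valid: $AD^{-1}\boldsymbol\Theta\in\mathrm{Im}(A)=\mathrm{Im}(L)$, because $L=(AD^{-1/2})(AD^{-1/2})^\top$ so its image equals that of $A$; and any other solution differs by an element of $\ker L=\ker A^\top$, so it gives the same $\boldsymbol\ell$. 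This yields $\boldsymbol\ell=\mu\boldsymbol p$ with $\boldsymbol p$ as in \eqref{eq:HN-weights-formula-thm}. Finally, $\boldsymbol p$ is $-D^{-1}\boldsymbol\Theta$ minus its $D$-orthogonal projection onto $D^{-1}\mathrm{Im}(A^\top)$, so $\langle\boldsymbol\Theta,\boldsymbol p\rangle=-\|\boldsymbol p\|_D^2$. The normalisation (a) then forces $\mu=1/\|\boldsymbol p\|_D^2$.

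The main obstacle is the positivity $\nu_c\ge 0$ and, more generally, justifying that the Kempf minimiser restricted to this fixed filtration type coincides with the minimiser of the above program, whose admissibility cone is determined by which tensor components of $\boldsymbol\phi$ are nonzero. My plan is to avoid verifying the sign of $\boldsymbol\nu$ directly. Kempf's theorem already guarantees that $\boldsymbol\ell$ is the minimiser, so KKT multipliers exist; the argument above shows that any multipliers produce the stated closed form regardless of their signs, because $A\boldsymbol\ell=0$ determines $\boldsymbol\ell$ uniquely. Here $A\boldsymbol\ell=0$ is the condition that every component in $\mathcal{C}(\mathrm{gr}_F(\boldsymbol\phi))$ has weight zero.
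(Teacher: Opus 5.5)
Your derivation of the weight formula is correct and somewhat cleaner than the paper's. You work directly with block-constant weights $\mathbf{x}\in\mathbb{R}^m$ and the $D$-weighted norm, where the paper sets up the program in $\mathbb{R}^N$ and projects with $\Pi$. Describing $\boldsymbol p$ as the $D$-orthogonal residual of $-D^{-1}\boldsymbol\Theta$ off $D^{-1}\mathrm{Im}(A^\top)$ makes $\langle\boldsymbol\Theta,\boldsymbol p\rangle=-\|\boldsymbol p\|_D^2$, and hence $\boldsymbol\ell=\boldsymbol p/\|\boldsymbol p\|_D^2$, transparent. Two small repairs are needed there:
\begin{itemize}
\item The admissibility cone also contains the constraints from components of strictly positive $\boldsymbol\ell$-weight. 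These are slack at $\boldsymbol\ell$, so either keep them and let complementary slackness kill their multipliers, or drop them using that a local minimum of a convex program is global.
\item You need density of rational points to pass from Kempf's rational minimiser to the real program.
\end{itemize}
Your proof of (b) by citing Proposition~\ref{prop:shifted-linearisation} with your identifications is also fine.

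The genuine gap is uniqueness. The theorem asserts that $\mathrm{HN}(\boldsymbol\phi)$ is the \emph{only} admissible $\mathbb{Q}$-filtration satisfying (a) and (b); Kempf minimality is not part of that characterisation. You prove only that the Kempf minimiser is unique and satisfies (a) and (b). That leaves open an admissible $(G^\bullet W,\boldsymbol\kappa)$, induced by some $\lambda_{\boldsymbol\kappa}$, with $\langle\boldsymbol\Theta,\boldsymbol\kappa\rangle=-1$ and $\mathrm{gr}_G(\boldsymbol\phi)$ semistable for its own shifted parameter, that is not maximally destabilising. Proposition~\ref{prop:shifted-linearisation} cannot exclude this, since it presupposes $[\lambda_{\boldsymbol\kappa}]\in\mathcal{B}$, which is unknown for an arbitrary competitor.

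The missing argument is a convexity comparison in a common torus.
\begin{itemize}
\item Choose a maximal torus in $P(\lambda)\cap P(\lambda_{\boldsymbol\kappa})$. Both filtrations are then adapted to one basis and $\lambda,\lambda_{\boldsymbol\kappa}$ commute. Write $\widehat{\boldsymbol\ell},\widehat{\boldsymbol\kappa}\in\mathbb{Q}^N$ for their weights and $\widehat{\boldsymbol\theta}$ for the vector with $\theta_v$ repeated $d_v$ times.
\item Then $\lambda\in X_*(L(\lambda_{\boldsymbol\kappa}))_{\mathbb{Q}}$. Moreover $\lambda$ is admissible for $\mathrm{gr}_G(\boldsymbol\phi)$, because the nonzero components of $\mathrm{gr}_G(\boldsymbol\phi)$ are among those of $\boldsymbol\phi$.
\item By (a) for $\boldsymbol\kappa$, the shifted parameter is $\widetilde{\theta}_{v,k}=\theta_v+\kappa_k/\|\boldsymbol\kappa\|_D^2$. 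Testing semistability of $\mathrm{gr}_G(\boldsymbol\phi)$ against $\lambda$ gives $\|\boldsymbol\kappa\|_D^2\langle\widehat{\boldsymbol\theta},\widehat{\boldsymbol\ell}\rangle+\langle\widehat{\boldsymbol\kappa},\widehat{\boldsymbol\ell}\rangle\ge 0$. Using (a) for $\boldsymbol\ell$, this becomes $\langle\widehat{\boldsymbol\kappa},\widehat{\boldsymbol\ell}-\widehat{\boldsymbol\kappa}\rangle\ge 0$.
\item Since $\boldsymbol\kappa$ is feasible for the normalised Kempf problem, $\|\widehat{\boldsymbol\kappa}\|\ge\|\widehat{\boldsymbol\ell}\|$.
\item Expanding $\|\widehat{\boldsymbol\ell}\|^2=\|\widehat{\boldsymbol\kappa}\|^2+2\langle\widehat{\boldsymbol\kappa},\widehat{\boldsymbol\ell}-\widehat{\boldsymbol\kappa}\rangle+\|\widehat{\boldsymbol\ell}-\widehat{\boldsymbol\kappa}\|^2$ then forces $\widehat{\boldsymbol\kappa}=\widehat{\boldsymbol\ell}$, so $(G^\bullet W,\boldsymbol\kappa)=\mathrm{HN}(\boldsymbol\phi)$.
\end{itemize}
This is the step that makes (b) an intrinsic characterisation of the filtration, and it is the heart of the paper's uniqueness proof.
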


\begin{proof}

First we prove the existence of $\mathrm{HN}(\boldsymbol{\phi})$. By the Affine Kempf Instability Theorem, there exists a rational 1-PS $\lambda \in X_*(G)_{\mathbb{Q}}$ minimising the Kempf functional \eqref{eq:kempf-functional}, unique up to positive rational scaling and conjugation by $P(\lambda)$. We rescale it to satisfy
$$(\chi_\theta, \lambda) = \sum_{v \in \mathcal{V}} \theta_v\sum_{k=1}^m \ell_k d_{v,k} = \langle \boldsymbol{\Theta}, \boldsymbol{\ell} \rangle =  -1$$
We define $\mathrm{HN}(\boldsymbol{\phi})$ to be the admissible $\mathbb{Q}$-filtration induced by the rational 1-PS $\lambda$. By Lemma~\ref{lem:associated-graded-independent}(2), $\mathrm{HN}(\boldsymbol{\phi})$ is well-defined and independent of the choice of $\lambda$.

First we prove the formula \eqref{eq:HN-weights-formula-thm}. As all maximal tori are conjugate in $G$, we may assume without loss generality that $T$ constains $\lambda$. As a rational 1-PS of $T$, we may express $\lambda$ to be diagonal with respect to an eigenbasis $\{\boldsymbol{\varepsilon}_{v,i}\}_{i = 1}^{d_v}$ for each $W_v$ for $T$. This identifies $X_*(T)_\mathbb{Q}$ with $\mathbb{Q}^N$ where $N = \sum_{v \in \mathcal{V}} d_v$. Let $\langle \cdot, \cdot \rangle$ be the Euclidean inner product on $\mathbb{Q}^N$. Let $\mathbf{x} = (x_{v,i})$ denote the weights of a rational 1-PS of $T$, $\widehat{\boldsymbol{\theta}} = (\widehat{\theta}_{v,i})$ denote the stability parameter $\widehat{\theta}_{v,i} := \theta_v$, and $\widehat{\boldsymbol{\ell}} = (\widehat{\ell}_{v,i})$ denote the weights of the rational 1-PS $\lambda$. We have $(\chi_{\boldsymbol{\theta}},\lambda) = \langle \widehat{\boldsymbol\theta},\widehat{\boldsymbol{\ell}}\rangle = -1$. Let $b = (e,\widehat{I},\widehat{J})$ where $e \in \mathcal{E}$, $\widehat{I}$ is a function sending $s \in s(e)$ to $\widehat{I}(s) \in [d_s]$, $\widehat{J}$ is a function sending $t \in t(e)$ to $\widehat{J}(t) \in [d_t]$. Let $C_b$ be the coefficient of the component $\boldsymbol{\phi}_b : \bigotimes_{s \in s(e)} \boldsymbol{\varepsilon}_{s,\widehat{I}(s)} \rightarrow \bigotimes_{t \in t(e)} \boldsymbol{\varepsilon}_{t,\widehat{J}(t)}$. Denote by $B_{\boldsymbol{\phi}}$ the set of all $b$ such that $C_b \neq 0$. Let
$$\mathbf{a}_b := \sum_{t \in t(e)} \boldsymbol{e}_{t,\widehat{J}(t)} - \sum_{s \in s(e)} \boldsymbol{e}_{s,\widehat{I}(s)} \in \mathbb{Z}^N$$
where $\boldsymbol{e}_{v,i}$ is a standard basis vector in $\mathbb{Z}^N$. Consider the quadratic program
\begin{equation*}
    \begin{split}
        \mathrm{minimize} \quad & \frac{1}{2}\|\mathbf{x}\|^2 \\
        \mathrm{subject}\;\mathrm{to} \quad & \langle \widehat{\boldsymbol{\theta}},\mathbf{x}\rangle = -1, \\
        & \langle \mathbf{a}_b, \mathbf{x} \rangle \ge 0, \quad \forall b \in B_{\boldsymbol{\phi}}
    \end{split}
\end{equation*}
over $\mathbf{x} \in \mathbb{R}^N$. With $(\chi_{\boldsymbol{\theta}},\lambda) = -1$ fixed, minimising the Kempf functional is equivalent to minimising $\|\lambda\|$, which is equivalent to finding a minimiser $\mathbf{x} \in \mathbb{Q}^N$ for this quadratic program. Consider the rational polyhedron
$$S := \{\mathbf{x} \in \mathbb{R}^N \;|\; 
\langle\widehat{\boldsymbol{\theta}},\mathbf{x}\rangle = -1, \langle \mathbf{a}_b, \mathbf{x} \rangle \ge 0, \, \forall b \in B_{\boldsymbol{\phi}}\}$$
in $\mathbb{R}^N$. By definition, $\widehat{\boldsymbol{\ell}} \in S$ and for every rational point $\mathbf{x} \in S \cap \mathbb{Q}^N$, we have $\frac{1}{2}\|\widehat{\boldsymbol{\ell}}\|^2 \leq \frac{1}{2}\|\mathbf{x}\|^2$. By density of rational points, $\frac{1}{2}\|\widehat{\boldsymbol{\ell}}\|^2 \leq \frac{1}{2}\|\mathbf{x}\|^2$ for all $\mathbf{x} \in S$. As $S$ is convex and $\frac{1}{2}\|\cdot\|^2$ is strictly convex, $\widehat{\boldsymbol{\ell}}$ is the unique minimiser of the quadratic program. Thus, by the Karush-Kuhn-Tucker Conditions, there exists a $\mu \in \mathbb{R}$ and $\nu_b \geq 0$ such that
\begin{equation}{\label{eq:kkt-condition-1}}
    \widehat{\boldsymbol{\ell}} + \mu \widehat{\boldsymbol{\theta}} - \sum_{b \in B_{\boldsymbol{\phi}}} \nu_b \mathbf{a}_b = \boldsymbol0
\end{equation}
and
\begin{equation}{\label{eq:kkt-condition-2}}
    \nu_b \langle \mathbf{a}_b, \widehat{\boldsymbol{\ell}}\rangle = 0, \quad \forall b \in B_{\boldsymbol{\phi}}.
\end{equation}
Taking the inner product of both sides of \eqref{eq:kkt-condition-1} with $\widehat{\boldsymbol{\ell}}$ and using $\langle \widehat{\boldsymbol{\theta}},\widehat{\boldsymbol{\ell}} \rangle = -1$ and \eqref{eq:kkt-condition-2} allows us to determine
$$\mu = \|\widehat{\boldsymbol{\ell}}\|^2 = \|\boldsymbol{\ell}\|_D^2 > 0.$$ Let $c = (e,I,J)$ for $e \in \mathcal{E}$, $I : s(e) \rightarrow [m]$, and $J : t(e) \rightarrow [m]$. Define
$$B_c := \left\{b = (e,\widehat{I},\widehat{J}) \in B_{\boldsymbol{\phi}} \;\middle|\; \widehat{\ell}_{s,\widehat{I}(s)} = \ell_{I(s)}, \,\forall s \in s(e), \; \widehat{\ell}_{t,\widehat{J}(t)} = \ell_{J(t)}, \, \forall t \in t(e) \right\}.$$
Then every $b \in B_{\boldsymbol{\phi}}$ belongs to exactly one $B_c$. Furthermore, if $b \in B_c$ for some $c = (e,I,J)$, then
\begin{equation}{\label{eq:kkt-balanced}}
    \langle \mathbf{a}_b,\widehat{\boldsymbol\ell}\rangle = \sum_{t \in t(e)} \widehat{\ell}_{t,\widehat{J}(t)} - \sum_{s \in s(e)} \widehat{\ell}_{s,\widehat{I}(s)} = \sum_{t \in t(e)} \ell_{{J}(t)} - \sum_{s \in s(e)} \ell_{I(s)}.  
\end{equation}
If $\nu_b > 0$, then $\langle \mathbf{a}_b,\widehat{\boldsymbol\ell}\rangle = 0$ by \eqref{eq:kkt-condition-2}. Hence, $c \in \mathcal{C}(\mathrm{gr}_{\mathrm{HN}}(\boldsymbol\phi))$. Otherwise, $\nu_b = 0$ does not contribute to the sum \eqref{eq:kkt-condition-1}. Thus, \eqref{eq:kkt-condition-1} becomes
\begin{equation}{\label{eq:kkt-condition-3}}
    \widehat{\boldsymbol{\ell}} + \|\boldsymbol{\ell}\|_D^2 \widehat{\boldsymbol{\theta}} - \sum_{c \in \mathcal{C}(\mathrm{gr}_{\mathrm{HN}}(\boldsymbol\phi))} \sum_{b \in B_c} \nu_b \mathbf{a}_b = \boldsymbol0.
\end{equation}
Let $\Pi : \mathbb{R}^N \rightarrow \mathbb{R}^m$ be defined on the basis vectors $\Pi(\boldsymbol{e}_{v,i}) := \boldsymbol{e}_k$ if $\widehat{\ell}_{v,i} = \ell_k$. Then applying $\Pi$ to \eqref{eq:kkt-condition-3}, we have $\Pi(\widehat{\boldsymbol{\ell}}) = D\boldsymbol{\ell}$, $\Pi(\widehat{\boldsymbol{\theta}}) = \boldsymbol{\Theta}$, and $\Pi(\mathbf{a}_b) = \mathbf{a}_c$ where
\begin{equation}{\label{eq:rows-of-A}}
\mathbf{a}_c := \sum_{t \in t(e)} \boldsymbol{e}_{J(t)} - \sum_{s \in s(e)} \boldsymbol{e}_{I(s)} \in \mathbb{Z}^m.    
\end{equation}
The vectors $\mathbf{a}_c$ form the rows of the matrix $A$ given in Definition~\ref{def:Q-filtration-hn-types}. Thus, \eqref{eq:kkt-condition-3} becomes
\begin{equation}{\label{eq:kkt-condition-4}}
    D\boldsymbol{\ell} + \|\boldsymbol{\ell}\|_D^2 \boldsymbol{\Theta} - A^\top\widetilde{\boldsymbol{\nu}} = \boldsymbol0
\end{equation}
where $\widetilde{\nu}_c = \sum_{b \in B_c} \nu_b \geq 0$. Multiply \eqref{eq:kkt-condition-4} by $AD^{-1}$ and we obtain
\begin{equation}{\label{eq:kkt-condition-5}}
     L\widetilde{\boldsymbol{\nu}} = \mathbf{y}
\end{equation}
where the first term has vanished since $A\boldsymbol{\ell} = \boldsymbol{0}$ by \eqref{eq:kkt-balanced}, and we have defined $L := AD^{-1}A^\top$ and $\mathbf{y} := \|\boldsymbol{\ell}\|_D^2 AD^{-1}\boldsymbol{\Theta}$. We see that $\mathrm{ker}(L) = \mathrm{ker}(A^\top)$ since if $A^\top\mathbf{x} = \boldsymbol{0}$, then clearly $L\mathbf{x} = \boldsymbol0$, and if $L\mathbf{x} = \boldsymbol{0}$, then $0 = \mathbf{x}^\top L \mathbf{x} = (A^\top\mathbf{x})^\top D^{-1}(A^\top\mathbf{x})$, and since $D^{-1}$ is positive-definite, $A^\top\mathbf{x} = \boldsymbol{0}$. Let $\widetilde{\boldsymbol{\nu}}' := L^\dagger \mathbf{y}$ where $L^\dagger$ is the Moore-Penrose inverse of $L$ \cite[Section 5.5.2]{GolubVanLoan2013}. Then we see that
$$L\widetilde{\boldsymbol{\nu}}' = LL^\dagger\mathbf{y} = LL^\dagger L\widetilde{\boldsymbol{\nu}} = L\widetilde{\boldsymbol{\nu}} = \mathbf{y}$$
by \eqref{eq:kkt-condition-5} and the property $LL^\dagger L = L$ of the Moore-Penrose inverse. Thus, $\widetilde{\boldsymbol{\nu}}' - \widetilde{\boldsymbol{\nu}} \in \mathrm{ker}(L) = \mathrm{ker}(A^\top)$ so
$$A^\top \widetilde{\boldsymbol{\nu}} = A^\top \widetilde{\boldsymbol{\nu}}'=  \|\boldsymbol{\ell}\|_D^2 A^\top L^\dagger AD^{-1}\boldsymbol{\Theta}.$$
Substituting this $A^\top \widetilde{\boldsymbol{\nu}}$ into \eqref{eq:kkt-condition-4} and rearranging, we obtain the formula
$$\frac{\boldsymbol{\ell}}{\|\boldsymbol{\ell}\|^2_D} = -D^{-1} \left( I - A^\top L^\dagger A D^{-1} \right) \mathbf{\Theta}.$$

Now we prove (b). Note that $\lambda$ is fixed from before. Let $\lambda' \in X_*(L(\lambda))_{\mathbb{Q}}$ be a rational 1-PS of the Levi subgroup $L(\lambda)$, admissible for $\mathrm{gr}_{\mathrm{HN}}(\boldsymbol{\phi})$. Let $T'$ be a maximal torus in $L(\lambda)$ containing both $\lambda$ and $\lambda'$. Then we can simultaneously diagonalise them in $T'$ with respect to a single eigenbasis for each $W_v$. Redoing the same argument as before, by the KKT Conditions, there exist $\mu \in \mathbb{R}$ and $\nu_b \geq 0$ such that
\begin{equation}{\label{eq:kkt-condition-1-gr}}
    \widehat{\boldsymbol{\ell}} + \mu \widehat{\boldsymbol{\theta}} = \sum_{b \in B_{\boldsymbol{\phi}}} \nu_b \mathbf{a}_b
\end{equation}
and
\begin{equation}{\label{eq:kkt-condition-2-gr}}
    \nu_b \langle \mathbf{a}_b, \widehat{\boldsymbol{\ell}}\rangle = 0, \quad  \forall b \in B_{\boldsymbol{\phi}}.
\end{equation}
Recall that we imposed the normalisation $\langle \boldsymbol\Theta, \widehat{\boldsymbol{\ell}}\rangle = -1$. Again we find $\mu = \|\boldsymbol{\ell}\|_D^2 > 0$. Let $\widehat{\boldsymbol{r}} \in \mathbb{Q}^N$ be the weights of the rational 1-PS $\lambda'$. Taking the inner product of \eqref{eq:kkt-condition-1-gr} with $\widehat{\boldsymbol{r}}$ gives
\begin{equation}{\label{eq:kkt-condition-3-gr}}
    \langle \|\boldsymbol{\ell}\|_D^2 \widehat{\boldsymbol{\theta}} + \widehat{\boldsymbol{\ell}}, \widehat{\boldsymbol{r}}\rangle = \sum_{b \in B_{\boldsymbol{\phi}}} \nu_b \langle \mathbf{a}_b, \widehat{\boldsymbol{r}}\rangle.
\end{equation}
As $\widehat{\boldsymbol\ell}$ zeros out every component $b \in B_{\boldsymbol\phi}$ of $\boldsymbol\phi$ with $\langle \mathbf{a}_b, \widehat{\boldsymbol{\ell}} \rangle > 0$, $\lambda'$ is admissible for $\mathrm{gr}_\mathrm{HN}(\boldsymbol\phi)$ if and only if $\langle \mathbf{a}_b, \widehat{\boldsymbol{r}}\rangle \geq 0$ for all $b \in B_{\boldsymbol{\phi}}$ such that $\langle \mathbf{a}_b, \widehat{\boldsymbol{\ell}} \rangle = 0$. By \eqref{eq:kkt-condition-2-gr}, this implies that the right-hand side of \eqref{eq:kkt-condition-3-gr} is non-negative. The left-hand side of \eqref{eq:kkt-condition-3-gr} is equal to the pairing $(\chi_\lambda,\lambda')$ of the character $\chi_\lambda$ in Proposition~\ref{prop:shifted-linearisation}, under the normalisation $\langle \boldsymbol\Theta, \widehat{\boldsymbol{\ell}}\rangle = -1$. Since $\lambda'$ was arbitrary, the Hilbert-Mumford Criterion is satisfied for $\widetilde{\boldsymbol{\theta}}$. This proves (b) and together with the normalisation $\langle \boldsymbol\Theta, \boldsymbol\ell\rangle = -1$ in (a), this finishes the proof of the existence of $\mathrm{HN}(\boldsymbol\phi)$.

Now we prove uniqueness. Suppose $(G^\bullet W,\boldsymbol{\kappa})$ is a $\mathbb{Q}$-filtration admissible for $\boldsymbol{\phi}$ which satisfies (a) and (b), and let $\mathrm{HN}(\boldsymbol\phi) = (F^\bullet W,\boldsymbol{\ell})$. Using the fact that the intersection of any two parabolic subgroups of $G$ contains a maximal torus $T$ \cite[Theorem 17.16]{milne_algebraic_groups}\cite[Corollary 14.13]{borel1991}, we can find a basis for each $W_v$ which is \textit{adapted} to both filtrations $F^\bullet W$ and $G^\bullet W$, i.e., every $F^kW_v$ and $G^kW_v$ is a span of a subset of the basis. Define $\lambda, \mu \in X_*(T)$ to be the rational 1-PSs that induce $(F^\bullet W,\boldsymbol{\ell})$ and $(G^\bullet W,\boldsymbol{\kappa})$ respectively. We know that $\lambda$ and $\mu$ are both normalised and admissible for $\boldsymbol{\phi}$, and since they are both diagonalised by the adapted basis, they commute, and hence are both admissible for $\mathrm{gr}_G(\boldsymbol{\phi})$. By (b), $\mathrm{gr}_G(\boldsymbol{\phi})$ is $\widetilde{\boldsymbol{\theta}}$-semistable, so applying the Hilbert-Mumford Criterion to $\lambda \in X_*(L(\mu))$, we therefore have the inequality
\begin{equation}{\label{eq:kkt-condition-unique-1}}
    \|\boldsymbol{\kappa}\|_D^2\langle \widehat{\boldsymbol{\theta}},\widehat{\boldsymbol{\ell}}\rangle + \langle \widehat{\boldsymbol{\kappa}},\widehat{\boldsymbol{\ell}}\rangle \geq 0
\end{equation}
seen in \eqref{eq:kkt-condition-3-gr}. But since $\langle \widehat{\boldsymbol{\theta}},\widehat{\boldsymbol{\ell}}\rangle = -1$ by (a), \eqref{eq:kkt-condition-unique-1} becomes
\begin{equation}{\label{eq:kkt-condition-unique-2}}
    \langle \widehat{\boldsymbol{\kappa}},\widehat{\boldsymbol{\ell}} - \widehat{\boldsymbol{\kappa}}\rangle \geq 0
\end{equation}
By the definition of $\mathrm{HN}(\boldsymbol{\phi})$, $\lambda$ is a minimiser of the Kempf functional, so
\begin{equation}{\label{eq:kkt-condition-unique-3}}
    \|\widehat{\boldsymbol{\kappa}}\| \geq \|\widehat{\boldsymbol{\ell}}\|.    
\end{equation}
Combining the identity
$$\|\widehat{\boldsymbol{\ell}}\|^2 = \langle \widehat{\boldsymbol{\kappa}} + (\widehat{\boldsymbol{\ell}} - \widehat{\boldsymbol{\kappa}}), \widehat{\boldsymbol{\kappa}} + (\widehat{\boldsymbol{\ell}} - \widehat{\boldsymbol{\kappa}})\rangle = \|\widehat{\boldsymbol{\kappa}}\|^2 + 2\langle \widehat{\boldsymbol{\kappa}}, \widehat{\boldsymbol{\ell}} - \widehat{\boldsymbol{\kappa}}\rangle + \|\widehat{\boldsymbol{\ell}} - \widehat{\boldsymbol{\kappa}}\|^2$$
with \eqref{eq:kkt-condition-unique-2} and \eqref{eq:kkt-condition-unique-3}, we obtain
$$\|\widehat{\boldsymbol{\kappa}}\|^2 \geq \|\widehat{\boldsymbol{\kappa}}\|^2 + 2\langle \widehat{\boldsymbol{\kappa}}, \widehat{\boldsymbol{\ell}} - \widehat{\boldsymbol{\kappa}}\rangle + \|\widehat{\boldsymbol{\ell}} - \widehat{\boldsymbol{\kappa}}\|^2 \geq \|\widehat{\boldsymbol{\kappa}}\|^2 + \|\widehat{\boldsymbol{\ell}} - \widehat{\boldsymbol{\kappa}}\|^2.$$
Thus, $0 \geq \|\widehat{\boldsymbol{\ell}} - \widehat{\boldsymbol{\kappa}}\|^2$ so $\widehat{\boldsymbol{\ell}} = \widehat{\boldsymbol{\kappa}}$, $\lambda = \mu$, and $(G^\bullet W,\boldsymbol{\kappa}) = \mathrm{HN}(\boldsymbol\phi)$.  
\end{proof}

\begin{remark}\label{rem:quivers-hn-weights}
If $\mathcal{H}$ is a quiver, then every row $\mathbf{a}_c$ of $A$ in \eqref{eq:rows-of-A} is of the form $\mathbf{a}_c = \mathbf{e}_j - \mathbf{e}_i$, where $i$ and $j$ correspond to the weight indices of the source and target vertices for $c = (e,I,J)$. Since $c \in \mathcal{C}(\mathrm{gr}_{\mathrm{HN}}(\boldsymbol\phi))$ indexes the components of $\boldsymbol{\phi}$ where $\ell_i = \ell_j$, and since every weight is distinct, we have $i = j$ so $A = 0$. Thus, the formula for $\boldsymbol{p}$ in \eqref{eq:HN-weights-formula-thm} becomes
$$\boldsymbol{p} = -D^{-1}\boldsymbol{\Theta}$$
which, componentwise, is precisely the Harder-Narasimhan slopes
$$\mu_{\boldsymbol{\theta}}(\mathbf{d}^{i} - \mathbf{d}^{i-1}) = -\frac{\sum_{v \in V} \theta_v \cdot d^{(i)}_v}{\sum_{v \in V} d^{(i)}_v}$$
in \eqref{eq:slope-quiver} of the quiver representation $\boldsymbol{\phi}$.
\end{remark}

\printbibliography

\end{document}